\DeclareFontFamily{U}{mathx}{\hyphenchar\font45}
\DeclareFontShape{U}{mathx}{m}{n}{
	<5> <6> <7> <8> <9> <10>
	<10.95> <12> <14.4> <17.28> <20.74> <24.88>
	mathx10
}{}
\DeclareSymbolFont{mathx}{U}{mathx}{m}{n}
\DeclareMathAccent{\widecheck}      {0}{mathx}{"71}
\renewcommand{\email}[1]{\emailname: #1} 
\renewenvironment{proof}{\noindent{\itshape Proof.}}{\smartqed\qed}
\newcommand{\bsa}{{\boldsymbol{a}}}
\newcommand{\bsb}{{\boldsymbol{b}}}
\newcommand{\bsc}{{\boldsymbol{c}}}
\newcommand{\bsk}{{\boldsymbol{k}}}
\newcommand{\bst}{{\boldsymbol{t}}}
\newcommand{\bsw}{{\boldsymbol{w}}}
\newcommand{\bsx}{{\boldsymbol{x}}}
\newcommand{\bsy}{{\boldsymbol{y}}}
\newcommand{\bsz}{{\boldsymbol{z}}}
\newcommand{\bsX}{{\boldsymbol{X}}}
\newcommand{\bsZ}{{\boldsymbol{Z}}}
\newcommand{\bszero}{{\boldsymbol{0}}} 
\newcommand{\bsone}{{\boldsymbol{1}}}  
\newcommand{\bstheta}{{\boldsymbol{\theta}}}
\newcommand{\bbE}{{\mathbb{E}}}
\newcommand{\bbP}{{\mathbb{P}}}
\newcommand{\N}{{\mathbb{N}}} 
\newcommand{\R}{{\mathbb{R}}} 
\DeclareSymbolFont{bbold}{U}{bbold}{m}{n}
\DeclareSymbolFontAlphabet{\mathbbold}{bbold}
\newcommand{\calF}{{\mathcal{F}}}
\newcommand{\calG}{{\mathcal{G}}}
\newcommand{\calM}{{\mathcal{M}}}
\newcommand{\calN}{{\mathcal{N}}}
\newcommand{\calO}{{\mathcal{O}}}
\newcommand{\calP}{{\mathcal{P}}}
\newcommand{\calX}{{\mathcal{X}}}
\newcommand{\fraku}{{\mathfrak{u}}}
\providecommand{\argmin}{\operatorname*{argmin}}
\DeclareSymbolFont{bbold}{U}{bbold}{m}{n}
\DeclareSymbolFontAlphabet{\mathbbold}{bbold}
\providecommand*{\toclevel@author}{999}
\providecommand*{\toclevel@title}{0}
\begin{document}
	
\newcommand{\FJHkeepvalues}{
	\edef\FJHrestorevalues{%
			\arrayrulewidth=\the\arrayrulewidth
		}%
	}
	
\FJHkeepvalues
\setlength{\arrayrulewidth}{0.5mm}
	
\newcommand{\FJHLessonZero}{The trio identity \eqref{FJH:eq:trio} decomposes the 
cubature error into a 
	product of three factors: the variation of the integrand, the discrepancy of the 
	sampling measure, and the confounding. This identity shows how the integrand and 
	the sampling measure each contribute to the cubature error.  }	

\newcommand{\FJHLessonSix}{The trio identity has four versions, \eqref{FJH:eq:dtrio}, 
\eqref{FJH:eq:rtrio}, \eqref{FJH:eq:btrio}, and \eqref{FJH:eq:rbtrio}, depending on 
whether the integrand is deterministic or Bayesian and whether the sampling measure is 
deterministic or random.  }

\newcommand{\FJHLessonTwoHalf}{The deterministic discrepancy when $\calF$ is an 
RKHS  has a simple, explicit form involving three terms. }	

\newcommand{\FJHLessonFourteen}{The formula for the Bayesian discrepancy mimics 
that for the deterministic discrepancy with $\calF$ an RKHS. }	

\newcommand{\FJHLessonSeven}{Although it has traditionally been ignored, the 
confounding helps explain why the cubature error may decay to zero 
much faster or more slowly than the discrepancy.  }

\newcommand{\FJHQOne}{How do good sampling measures, $\widehat{\nu}$, make 
the error smaller?}

\newcommand{\FJHLessonTwo}{Quasi-Monte Carlo methods replace IID data sites by 
	low discrepancy data sites, such as
	Sobol' sequences and integration lattice nodeset sequences.  The resulting sampling 
	measures have discrepancies and cubature errors that decay to zero at 
	a faster rate than in the case of IID sampling. }

\newcommand{\FJHLessonThree}{Randomizing the sampling measure may not only 
	eliminate bias, but it may help improve accuracy by avoiding the awful minority of 
	possible sampling measures. }

\newcommand{\FJHLessonFive}{The benefits of sampling measures with asymptotically 
smaller discrepancies are limited to those integrands with finite variation. }

\newcommand{\FJHQTwo}{How can the error be decreased by re-casting the problem 
with a different integrand, $f$? }

\newcommand{\FJHLessonFour}{Well-chosen variable transformations may reduce 
	cubature error by producing an integrand with a smaller variation than otherwise. }

\newcommand{\FJHLessonEight}{The cubature error for high dimensional problems can 
often be reduced by arranging for the integrand to depend primarily on those coordinates 
with lower indices. }

\newcommand{\FJHLessonThirteen}{Infinite dimensional problems may be efficiently 
solved by multi-level methods or multivariate decomposition methods, which approximate 
the integral by a sum of finite dimensional integrals. }

\newcommand{\FJHQThree}{How many samples, $n$, are required to meet a specified 
error tolerance? }

\newcommand{\FJHLessonTen}{Bayesian cubature provides data-based probabilistic 
error bounds under the assumption that the integrand is a Gaussian process. }

\newcommand{\FJHLessonEleven}{Automatic stopping criteria for 
(quasi\nobreak\mbox{-)}Monte Carlo 
simulations have been developed for integrands that lie in a cone of functions that are 
not too 
wild. }

\newcommand{\FJHCNF}{\textup{CNF}}
\newcommand{\FJHDSC}{\textup{DSC}}
\newcommand{\FJHVAR}{\textup{VAR}}

\newlength{\FJHfigheight}
\setlength{\FJHfigheight}{4 cm}
\newcommand{\FJHFigDirectory}{Hickernell} 

\newtheorem{FJHLesson}{Lesson}
\newcommand{\FJHnorm}[1]{\ensuremath{\left \lVert #1 \right \rVert}} 
\newcommand{\FJHabs}[1]{\ensuremath{\bigl \lvert #1 \bigr \rvert}}
\newcommand{\FJHbiggabs}[1]{\ensuremath{\biggl \lvert #1 \biggr \rvert}}
\newcommand{\FJHip}[3][{}]{\ensuremath{\left \langle #2, #3 \right \rangle_{#1}}}

\allowdisplaybreaks

\title*{The Trio Identity for Quasi-Monte Carlo Error}
\author{Fred J. Hickernell}
\institute{Fred J. Hickernell
\at Department of Applied Mathematics,  Illinois Institute of Technology, 10 W. 32$^{\text{nd}}$ Street, RE 208, Chicago, IL 60616, USA 
\email{hickernell@iit.edu}}
\maketitle

\abstract{Monte Carlo methods approximate integrals by sample averages 
of integrand values.  The error of Monte Carlo methods may be expressed as a trio 
identity: the product of the variation of the integrand, the discrepancy of the sampling 
measure, and the confounding.  The trio identity has different versions, depending 
on whether  the integrand is deterministic or Bayesian and whether the sampling 
measure is deterministic or random.  Although the variation and the discrepancy are 
common in the literature, the confounding is relatively unknown and under-appreciated.   
Theory and examples are used to show how the cubature error may be reduced by 
employing the low discrepancy sampling that defines quasi-Monte Carlo methods.  The 
error may also be reduced  by rewriting the integral in terms of a different integrand.  
Finally, the confounding  explains why the cubature error might  decay at a rate different 
from that of the discrepancy.} 

\section{Introduction}
Monte Carlo methods are used to approximate multivariate integrals that cannot be 
evaluated analytically, i.e.,  integrals of the form
\begin{equation}
\mu = \int_{\calX} f(\bsx) \, \nu(\D \bsx), \label{FJH:eq:INT} \tag{INT}
\end{equation}
where $f:\calX \to \R$ is a measurable function, $\calX$ is a measurable set, and $\nu$ 
is a 
\emph{probability} measure.  Here, $\mu$ is the weighted average of the integrand.  Also,
$\mu = \bbE[f(\bsX)]$, where the random variable $\bsX$ has probability measure 
$\nu$.  
Monte 
Carlo methods take the form of a weighted average of values of $f$ at a finite number of data 
sites, $\bsx_1, \ldots, \bsx_n$:
\begin{equation} \label{FJH:eq:AppxINT} \tag{MC}
\widehat{\mu} = \sum_{i=1}^n f(\bsx_i) w_i = \int_{\calX} f(\bsx) \, \widehat{\nu}(\D \bsx).
\end{equation}
The sampling measure, $\widehat{\nu}$, assigns a weight $w_i$ to the function value at  
$\bsx_i$ and lies in the vector space
\begin{equation} \label{FJH:eq:sampmeaset}
\calM_{\textup{S}} := \left \{\sum_{i=1}^n w_i \delta_{\bsx_i} : w_1, \ldots, w_n \in \R,\ 
\bsx_1, 
\ldots, \bsx_n \in \calX, \ n \in \N \right \},
\end{equation}
where  $\delta_{\bst}$ denotes a Dirac measure concentrated at point $\bst$. 
The data sites, the weights, and the sample size may be deterministic or random.  Later, 
we impose some constraints to facilitate the analysis.  

We are particularly interested in sampling measures that choose the data sites 
\emph{more cleverly} 
than independently and identically distributed (IID) with the aim of obtaining smaller 
errors for the same computational effort.  Such sampling measures are the 
hallmark of \emph{quasi-Monte Carlo methods}.  It is common to choose $w_1 = \cdots = 
w_n = 1/n$, in which case the sampling quality is determined solely by the choice of the 
data sites.

This tutorial describes how to characterize and analyze the \emph{cubature error}, $\mu 
- \widehat{\mu}$, as a trio identity:
\begin{equation} \tag{TRIO} \label{FJH:eq:trio}
\mu - \widehat{\mu}  = \FJHCNF(f,\nu - \widehat{\nu}) \, \FJHDSC(\nu - \widehat{\nu}) \, 
\FJHVAR(f),
\end{equation}
introduced by Xiao-Li Meng \cite{Men16a}.  Each term in this identity contributes to the 
error, and there are ways to decrease each. 
\begin{description}
	\item[$\FJHVAR(f)$] measures the \emph{variation} of the integrand from a typical 
	value. 
    The variation is positively homogeneous, i.e., $\FJHVAR(cf)  = 
	\FJHabs{c} \FJHVAR(f)$.  The variation is \emph{not} the variance.  Expressing $\mu$ 
	in 
	terms 
	of a different integrand by means of a variable transformation may decrease the 
	variation.
	\item [$\FJHDSC(\nu - \widehat{\nu})$] measures the \emph{discrepancy} of the 
	sampling measure 
	from the probability measure that defines the integral. The convergence rate of the 
	discrepancy to zero as $n \to \infty$ characterizes the quality of the sampling 
	measure.
	\item [$\FJHCNF(f,\nu - \widehat{\nu})$] measures the \emph{confounding} between 
	the 
	integrand and the difference between the measure defining the integral and the 
	sampling measure.  The magnitude of the confounding is bounded by one in 
	some settings and has an 
	expected square value of one in other settings.  When the convergence rate of 
	$\widehat{\mu} \to 
	\mu$ differs from the convergence rate of $ \FJHDSC(\nu - \widehat{\nu}) \to 0$, the 
	confounding is behaving unusually.
\end{description}

There are four versions of the trio identity corresponding to different models for the 
integrand and for the sampling measure as depicted  in Table 
\ref{FJH:tab:TrioMatrix}.  The integrand may be an arbitrary (deterministic) element of  a 
Banach space  
or it may be a Gaussian stochastic process.  The 
sampling measure may be an arbitrary  (deterministic) element of $\calM_{\textup{S}}$ or 
chosen 
randomly. Here we derive and explain these four  different versions of the trio 
identity and draw a baker's dozen of key lessons, which are repeated at the end of this 
article.

\begin{table}
	\setlength{\arrayrulewidth}{0.5mm}
	\centering
	\caption{Different versions of the trio identity \label{FJH:tab:TrioMatrix}}
	\begin{tabular}{r@{\quad}|c@{\quad}c@{\qquad}c}
		\multicolumn{2}{c}{}& \multicolumn{2}{c}{Sampling Measure, $\widehat{\nu}$}\\
		Integrand, $f$ && Deterministic & Random  
		\\    \cline{2-4} \\ [-1ex]
		Deterministic && Deterministic $={}^{\textup{D}}$ & Randomized $={}^{\textup{R}}$ 
		\\ 
		[1ex]
		Gaussian Process && Bayesian $={}^{\textup{B}}$ & Bayesian Randomized 
		$={}^{\textup{B}\textup{R}}$	
\end{tabular}
\end{table}

\begin{FJHLesson} \FJHLessonZero \end{FJHLesson}

\section{A Deterministic Trio Identity for Cubature Error} \label{FJH:sec:dettrio}

We start by generalizing the error bounds of 
Koksma \cite{Kok42} and Hlawka \cite{Hla61}. See also the monograph of Niederreiter 
\cite{Nie92}.  Suppose that the integrand lies in some Banach space, 
$(\calF,\FJHnorm{\cdot}_{\calF})$, 
where function evaluation at any point  in the 
domain,  $\calX$, is a 
bounded, linear functional.  This means that $\sup_{f \in \calF} 
\FJHabs{f(\bst)}/\FJHnorm{f}_{\calF} < 
\infty$ for all $\bst \in \calX$ and that $\int_{\calX} f(\bsx) \, \delta_{\bst}(\D\bsx) 
=f(\bst)$ 
for all $f \in \calF, \ \bst \in \calX$.  For example, one might choose $\calF = C[0,1]^d$, 
but $\calF = L^2[0,1]^d$ is unacceptable. 
Let $T :\calF \to \R$ be some bounded 
linear functional providing a typical value of $f$, e.g.,   $T(f) 
= f(\bsone)$ or $T(f) = \int_{\calX} f(\bsx) \, \nu(\D\bsx)$.  If $\{T(f) : f \in 
\calF\} \ne \{0\}$, then $\calF$ is assumed to contain  
constant functions.  The deterministic variation is a semi-norm that is defined as the 
norm of the function  minus its typical value:
\begin{equation}  \label{FJH:eq:detvardef}
\FJHVAR^{\textup{D}}(f) := \FJHnorm{f- T(f)}_{\calF} \qquad \forall f \in \calF.
\end{equation} 

Let $\calM$ denote the vector space of signed measures for which integrands in 
$\calF$ 
have finite integrals: $\calM : = \bigl \{\text{signed measures } \eta : \FJHabs{\int_{\calX} 
f(\bsx) 
\, \eta(\D\bsx) } < \infty \ \ 
\forall f \in \calF \bigr \}$.
We assume that our integral of interest is defined, so $\nu \in \calM$.  Since function 
evaluation is bounded, $\calM$ includes $\calM_{\textup{S}}$ defined in 
\eqref{FJH:eq:sampmeaset} as well.  Define the subspace  
\begin{equation} \label{FJH:eq:Mperpdef}
\calM_\bot : = \begin{cases}
\left \{\eta \in \calM :  \eta(\calX) = 0
\right\}, & \{T(f) : f \in \calF\} \ne \{0\}, \\
\calM, & \{T(f) : f \in \calF\} = \{0\}.
\end{cases}
\end{equation} 
For example, if $\widehat{\nu}(\calX) = \nu(\calX)$, which is common, then $\nu 
-\widehat{\nu}$ is automatically in 
$\calM_\bot$.  However, in some situations $\widehat{\nu}(\calX) \ne \nu(\calX)$, as is 
noted in the discussion following \eqref{FJH:eq:RKHSvecmatDisc} below.
A semi-norm on $\calM_\bot$ is induced by the norm on $\calF$, which 
provides the definition of discrepancy:
\begin{equation} \label{FJH:eq:detdiscdef}
\FJHnorm{\eta}_{\calM_\bot}  : =\sup_{f \in \calF : f \ne 0} \frac{\displaystyle 
\FJHbiggabs{\int_{\calX} 
f(\bsx) \, \eta(\D \bsx)}}{\FJHnorm{f}_{\calF}}, \qquad \FJHDSC^\textup{D}(\nu - 
\widehat{\nu}) : = \FJHnorm{\nu - \widehat{\nu}}_{\calM_\bot}.
\end{equation}

Finally, define the confounding as 
\begin{equation} \label{FJH:eq:detconfdef}
\FJHCNF^\textup{D}(f,\nu - \widehat{\nu}): =  \begin{cases} \displaystyle 
\frac{\displaystyle\int_{\calX} f(\bsx) \, (\nu - \widehat{\nu})(\D 
\bsx)}{\FJHVAR^\textup{D}(f)\FJHDSC^\textup{D}(\nu - \widehat{\nu})},  & 
\FJHVAR^\textup{D}(f)\FJHDSC^\textup{D}(\nu - \widehat{\nu}) \ne 0, \\[2ex]
0, & \text{otherwise}.
\end{cases}
\end{equation}
The above definitions lead to the deterministic trio identity for cubature 
error.

\begin{theorem}[Deterministic Trio Error Identity]  \label{FJH:thm:dtrio} For the spaces 
of integrands and 
measures defined above, and for the above definitions of variation, discrepancy, and 
confounding, the following error identity holds for all $f \in \calF$ and $\nu - 
\widehat{\nu}  \in 
\calM_\bot$: 
\begin{equation} \tag{DTRIO} \label{FJH:eq:dtrio}
\mu - \widehat{\mu}  = \FJHCNF^\textup{D}(f,\nu - \widehat{\nu}) \, 
\FJHDSC^\textup{D}(\nu - \widehat{\nu}) \, 
\FJHVAR^{\textup{D}}(f).
\end{equation}
Moreover, $\FJHabs{\FJHCNF^\textup{D}(f,\nu - \widehat{\nu})} \le 1$. 
\end{theorem}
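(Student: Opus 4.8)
The plan is to treat the identity \eqref{FJH:eq:dtrio} as essentially a rearrangement of the definitions, reserving the genuine content for the degenerate cases and for the bound. First I would note that, by \eqref{FJH:eq:INT}, \eqref{FJH:eq:AppxINT}, and linearity of integration against a signed measure, the cubature error is simply $\mu - \widehat{\mu} = \int_{\calX} f(\bsx)\,(\nu - \widehat{\nu})(\D\bsx)$. In the non-degenerate case $\FJHVAR^\textup{D}(f)\,\FJHDSC^\textup{D}(\nu - \widehat{\nu}) \ne 0$, the definition \eqref{FJH:eq:detconfdef} of the confounding makes the product on the right-hand side of \eqref{FJH:eq:dtrio} collapse immediately onto this integral, so the identity holds by inspection.

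The next step is to dispose of the degenerate case $\FJHVAR^\textup{D}(f)\,\FJHDSC^\textup{D}(\nu - \widehat{\nu}) = 0$, where \eqref{FJH:eq:detconfdef} sets the confounding to $0$ and the right-hand side of \eqref{FJH:eq:dtrio} is therefore $0$; here I must verify that the cubature error itself vanishes. I would split into two subcases. If $\FJHDSC^\textup{D}(\nu - \widehat{\nu}) = 0$, then the supremum in \eqref{FJH:eq:detdiscdef} is zero, forcing $\int_{\calX} f(\bsx)\,(\nu-\widehat{\nu})(\D\bsx) = 0$ for the $f$ at hand. If instead $\FJHVAR^\textup{D}(f) = \FJHnorm{f - T(f)}_{\calF} = 0$, then since $\FJHnorm{\cdot}_{\calF}$ is a genuine norm on the Banach space $\calF$, the integrand equals the constant $T(f)$ as an element of $\calF$, and integrating this constant against $\nu - \widehat{\nu} \in \calM_\bot$ yields $T(f)\,(\nu-\widehat{\nu})(\calX)$, which is zero because either $\calM_\bot$ enforces $(\nu-\widehat{\nu})(\calX)=0$ or $T \equiv 0$ forces $T(f)=0$ (cf. \eqref{FJH:eq:Mperpdef}). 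Either way the error is zero and the identity holds.

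For the bound $\FJHabs{\FJHCNF^\textup{D}(f,\nu - \widehat{\nu})} \le 1$, the degenerate case is immediate since the confounding is $0$. In the non-degenerate case the crucial device is to subtract the typical value before estimating: setting $g := f - T(f) \in \calF$, I would argue that $\int_{\calX} f(\bsx)\,(\nu-\widehat{\nu})(\D\bsx) = \int_{\calX} g(\bsx)\,(\nu-\widehat{\nu})(\D\bsx)$, because the discarded term $T(f)\,(\nu-\widehat{\nu})(\calX)$ vanishes by exactly the $\calM_\bot$ reasoning used above. Since the denominator is nonzero, $\FJHnorm{g}_{\calF} = \FJHVAR^\textup{D}(f) \ne 0$, so $g$ is an admissible nonzero test function in the supremum \eqref{FJH:eq:detdiscdef}, whence $\FJHabs{\int_{\calX} g(\bsx)\,(\nu-\widehat{\nu})(\D\bsx)} \le \FJHnorm{g}_{\calF}\,\FJHDSC^\textup{D}(\nu-\widehat{\nu}) = \FJHVAR^\textup{D}(f)\,\FJHDSC^\textup{D}(\nu-\widehat{\nu})$. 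Dividing by the positive normalizing product gives $\FJHabs{\FJHCNF^\textup{D}(f,\nu-\widehat{\nu})} \le 1$.

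I expect the only real subtlety to be the bookkeeping around $\calM_\bot$ and the two branches of its definition in \eqref{FJH:eq:Mperpdef}: the identity $\int_{\calX} T(f)\,(\nu-\widehat{\nu})(\D\bsx) = 0$ is precisely what makes both the variation-zero degenerate subcase and the typical-value subtraction in the bound go through, and it must be justified separately according to whether $T$ is trivial or not. Everything else is a direct unwinding of the definitions of variation, discrepancy, and confounding together with the supremum characterization of the dual norm.
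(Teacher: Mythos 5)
Your proposal is correct and follows essentially the same route as the paper's proof: write the error as $\int_{\calX} f\,(\nu-\widehat{\nu})(\D\bsx)$, handle the two degenerate subcases via the definitions of $\calM_\bot$ and of the discrepancy, and obtain the bound on the confounding by subtracting $T(f)$ and invoking the supremum in \eqref{FJH:eq:detdiscdef}. Your treatment of the two branches of \eqref{FJH:eq:Mperpdef} is slightly more explicit than the paper's, but the argument is the same.
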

\begin{proof}  The proof of this identity follows from the definitions above.  It follows 
from \eqref{FJH:eq:INT} and \eqref{FJH:eq:AppxINT} that for all $f \in \calF$ and $\nu - 
\widehat{\nu}  \in \calM_\bot$, the cubature error can be written as a 
single integral:
	\begin{equation} \label{FJH:eq:err_as_int}
	\mu - \widehat{\mu}   =  \int_{\calX} f(\bsx) \, (\nu - \widehat{\nu})(\D \bsx).
	\end{equation}
	If $\FJHVAR^{\textup{D}}(f) = 0$, then $f = T(f)$, and the integral above vanishes by 
	the 
	definition of $\calM_{\bot}$.  If $\FJHDSC^\textup{D}(\nu - \widehat{\nu}) = 0$, then 
	the 
	integral 
	above 
	vanishes by \eqref{FJH:eq:detdiscdef}.  Thus, for $\FJHVAR^{\textup{D}}(f) 
	\FJHDSC^\textup{D}(\nu 
	- \widehat{\nu}) = 
	0$ 
	the trio identity holds. If $\FJHVAR^{\textup{D}}(f) \FJHDSC^\textup{D}(\nu - 
	\widehat{\nu}) \ne 
	0$, then the 
	trio 
	identity also holds by the definition of the confounding.
	
	Next, we bound the magnitude of the confounding for $\FJHVAR^{\textup{D}}(f) 
	\FJHDSC^\textup{D}(\nu - 
	\widehat{\nu}) \ne 0$: 
	\begin{align*}
	\FJHabs{\FJHCNF(f,\nu - \widehat{\nu})} & = 
		\frac{\biggl \lvert\displaystyle\int_{\calX} f(\bsx) \, (\nu - \widehat{\nu})(\D 
			\bsx) \biggr \rvert}{\FJHVAR^\textup{D}(f)\FJHDSC^\textup{D}(\nu - 
			\widehat{\nu})} \quad 
			\text{by 
			\eqref{FJH:eq:detconfdef}}\\
		& = \frac{\biggl \lvert\displaystyle\int_{\calX} [f(\bsx) - T(f)] \, (\nu - 
		\widehat{\nu})(\D 
			\bsx) \biggr \rvert}{\FJHnorm{f-T(f)}_{\calF}\FJHDSC^\textup{D}(\nu - 
			\widehat{\nu})} 
			\quad 
			\text{by 
			\eqref{FJH:eq:detvardef} and \eqref{FJH:eq:Mperpdef}} \\
		& \le 1 \quad \text{by \eqref{FJH:eq:detdiscdef}},
\end{align*}
since $\FJHVAR^{\textup{D}}(f) \ne 0$ and so $f - T(f) \ne 0$.
\end{proof}

Because $\FJHabs{\FJHCNF^\textup{D}(f,\nu - \widehat{\nu})} \le 1$, the deterministic 
trio 
identity implies a deterministic error bound:  $\FJHabs{\mu - \widehat{\mu}}  \le 
\FJHDSC^\textup{D}(\nu - 
\widehat{\nu}) 
\, \FJHVAR^{\textup{D}}(f)$.  However, there is value in keeping the confounding term as 
noted 
below in Lesson \ref{FJH:Lesson:Seven}.

The error in approximating the integral of $cf$ is $c$ times that for approximating the 
integral of $f$.  This is reflected in the fact that $\FJHVAR^\textup{D}(cf) = \lvert c \rvert 
\FJHVAR^\textup{D}(f)$ and 
$\FJHCNF(cf,\nu - \widehat{\nu}) = \textup{sign}(c) \FJHCNF(f,\nu - \widehat{\nu})$, 
while $\FJHDSC^\textup{D}(\nu - \widehat{\nu})$ does not 
depend on the integrand.

When $\calF$ is a Hilbert space with reproducing kernel $K$, the discrepancy has an 
explicit expression in terms of $K$.  The reproducing kernel is the unique function, $K: 
\calX \times \calX \to \R$ satisfying these two properties \cite[Sec.\ 1]{Aro50}:
\begin{equation*}
K(\cdot,\bst) \in \calF \quad  \text{and} \quad f(\bst) = 
\FJHip[\calF]{K(\cdot,\bst)}{f} 
\qquad \forall f \in \calF, \ \bst \in \calX.
\end{equation*}
The Riesz Representation Theorem implies that the representer of cubature error 
is 
\begin{equation*}
\eta_{\textup{err}}(\bst) = \FJHip[\calF]{K(\cdot,\bst)}{\eta_{\textup{err}}} = \int_{\calX} 
K(\bsx,\bst) \, (\nu - 
\widehat{\nu})(\D\bsx).
\end{equation*}
Thus, the deterministic trio identity for the reproducing kernel Hilbert space (RKHS) case 
is
\begin{equation*}
\mu - \widehat{\mu} =  \FJHip[\calF]{\eta_{\textup{err}}}{f} = 
\underbrace{\frac{\FJHip[\calF]{\eta_{\textup{err}}}{f}}{\FJHnorm{f - 
T(f)}_{\calF} \FJHnorm{\eta_{\textup{err}}}_{\calF}} }_{\FJHCNF^\textup{D}(f,\nu - 
\widehat{\nu})}\, 
\underbrace{\FJHnorm{\eta_{\textup{err}}}_{\calF}}_{\FJHDSC^{\textup{D}}(\nu - 
\widehat{\nu})} \, 
\underbrace{\FJHnorm{f - T(f)}_{\calF}}_{\FJHVAR^\textup{D}(f)}
\end{equation*}
provided that 
\begin{equation} \label{FJH:eq:Tnucond}
T(f) [\nu(\calX) - \widehat{\nu}(\calX)] = 0.
\end{equation}  
The squared discrepancy  takes the 
form \cite{Hic99a}
\begin{align*}
\nonumber
[\FJHDSC^{\textup{D}}(\nu - \widehat{\nu})]^2 & = 
\FJHnorm{\eta_{\textup{err}}}_{\calF}^2 = 
\FJHip[\calF]{\eta_{\textup{err}}}{\eta_{\textup{err}}} 
\\
\nonumber
& = \int_{\calX \times \calX} K(\bsx,\bst) \, (\nu - \widehat{\nu})(\D \bsx) \, (\nu - 
\widehat{\nu})(\D \bst) \\
\nonumber
& = \int_{\calX \times \calX} K(\bsx,\bst) \, \nu(\D \bsx) \, \nu (\D \bst)  \\
& \qquad \qquad - 2 \sum_{i=1}^n w_i 
\int_{\calX} K(\bsx_i,\bst) \, \nu(\D \bst)+ \sum_{i,j = 1}^n w_iw_jK(\bsx_i,\bsx_j).
\end{align*}
Assuming that the single integral and double integral of the reproducing kernel can be 
evaluated analytically, the computational 
cost to evaluate the discrepancy is $\calO(n^2)$ unless the kernel 
has 
a special form that speeds up the calculation of the double sum. 

\begin{FJHLesson} \FJHLessonTwoHalf \end{FJHLesson}

In the RKHS case, the confounding corresponds to 
the cosine of the angle between  $f - T(f)$ and the cubature error representer, 
$\eta_{\textup{err}}$.  This cosine is no greater than one in magnitude, as expected.

The square deterministic discrepancy for an RKHS may be expressed in terms of vectors 
and matrices:
\begin{subequations} \label{FJH:eq:RKHSvecmatDisc}
	\begin{gather} 
   \bsw = \bigl(w_i 
	\bigr)_{i=1}^n, \qquad  k_0 = \int_{\calX} 
	K(\bsx,\bst) \,\nu(\D \bsx) \,\nu(\D \bst), \\
	\bsk = 
	\biggl( 
	\int_{\calX} 
	K(\bsx_i,\bst) \,\nu(\D \bst) \biggr)_{i=1}^n, \qquad
	\mathsf{K}  = \bigl( K(\bsx_i,\bsx_j) \bigr)_{i,j=1}^n, \\
	[\FJHDSC^{\textup{D}}(\nu - \widehat{\nu})]^2 =  k_0 - 2 \bsk^T \bsw + \bsw^T 
	\mathsf{K} \bsw.
	\end{gather}
\end{subequations}
Given fixed data sites, the optimal cubature weights to minimize the discrepancy are  
$\bsw = \mathsf{K}^{-1} \bsk$.
If $\bsone^T\mathsf{K}^{-1} \bsk = 1$, which is possible but not automatic, then 
$\widehat{\nu}(\calX) = \nu(\calX) = 1$ for these optimal weights, and 
\eqref{FJH:eq:dtrio} holds for general 
$T$.  Otherwise, one must define $T(f) = 0$ for all $f \in \calF$ to satisfy condition 
\eqref{FJH:eq:Tnucond} for these optimal cubature weights.

A particular example of this RKHS setting corresponds to the uniform probability measure 
$\nu$ on the $d$-dimensional unit cube, $\calX = 
[0,1]^d$, and the reproducing kernel defined by \cite{Hic97a}
\begin{equation} \label{FJH:eq:L2Kdef}
K(\bsx,\bst) =\prod_{k = 1}^d [2 - \max(x_k,t_k)].
\end{equation}
 In this example, $T(f) = f(\bsone)$, and the 
variation is 
\begin{equation*}
\FJHVAR^\textup{D}(f)  = \FJHnorm{f-f(\bsone)}_\calF = \Bigl \lVert \bigl ( 
\FJHnorm{\partial^\fraku 
f}_{L^2}\bigr 
)_{\emptyset \subsetneq \fraku \subseteq 1:d} \Bigr \rVert_{\ell^2} , \qquad 
\partial^\fraku f : = \frac{\partial^{\lvert\fraku\rvert} f}{\partial \bsx_\fraku} \biggr 
\rvert_{\bsx_{\bar{\fraku}} = \bsone}.
\end{equation*}
Here $1\!:\!d$ means  $= \{1, \ldots, d\}$, $x_\fraku$ means $(x_k)_{k \in \fraku}$, and 
$\bar{\fraku}$ denotes the complement of $\fraku$.  
The square discrepancy for the equally weighted case with $w_1 = \cdots = w_n = 1/n$ 
is
\begin{align}
\nonumber
[\FJHDSC^\textup{D}(\nu - \widehat{\nu})]^2  &= \left(\frac 43\right)^d - \frac{2}n 
\sum_{i=1}^n 
\prod_{k=1}^d 
\left (\frac{3 - x_{ik}^2}{2} \right) + \frac{1}{n^2}\sum_{i,j=1}^n \prod_{k = 1}^d [2 - 
\max(x_{ik},x_{jk})] 
\\ &= \Bigl \lVert \bigl ( \FJHnorm{\nu([\bszero,\cdot_\fraku]) - 
	\widehat{\nu}([\bszero,\cdot_\fraku])}_{L^2}\bigr )_{\emptyset \subsetneq \fraku 
	\subseteq 1:d} 
	\Bigr 
	\rVert_{\ell^2}. \label{FJH:eq:L2disc}
\end{align}

This discrepancy has a geometric interpretation:  $\nu([\bszero,\bsx_\fraku])$ 
corresponds to the \emph{volume} of the $\lvert \fraku \rvert$-dimensional box 
$[\bszero,\bsx_\fraku]$, and $\widehat{\nu}([\bszero,\bsx_\fraku])$ corresponds to the 
\emph{proportion} 
of data sites lying in the box $[\bszero,\bsx_\fraku]$. The discrepancy in 
\eqref{FJH:eq:L2disc}, which is 
called the $L^2$-discrepancy, depends on difference between this volume and this 
proportion 
for all $\bsx \in [0,1]^d$ and  for all $\emptyset \subsetneq \fraku \subseteq 1\!:\!d$. 

If the data sites  $\bsx_{1}, \ldots, \bsx_{n}$ are chosen to be IID with probability measure 
$\nu$, and $w_1 = \cdots = w_n = 1/n$, then the mean square discrepancy for 
the RKHS case is
\begin{equation*}
\bbE \bigl\{[\FJHDSC^{\textup{D}}(\nu - \widehat{\nu})]^2 \bigr \}  = \frac 1 n \left [ 
\int_{\calX} 
K(\bsx,\bsx) \, 
\nu(\D 
\bsx) - 
\int_{\calX \times \calX} K(\bsx,\bst) \, \nu(\D \bsx) \, \nu (\D \bst) \right ].
\end{equation*}
For the $L^2$-discrepancy in \eqref{FJH:eq:L2disc} this becomes 
\begin{equation} \label{FJH:eq:IIDL2disc}
\bbE \bigl\{[\FJHDSC^{\textup{D}}(\nu - \widehat{\nu})]^2 \bigr \} = \frac 1 n \left [ 
\left(\frac 32\right)^d - 
\left(\frac 43\right)^d \right ].
\end{equation}

Quasi-Monte Carlo methods generally employ sampling measures of the form  
$\widehat{\nu} = 
n^{-1}\sum_{i=1}^n \delta_{\bsx_i}$, but choose the data sites $\{\bsx_i\}_{i=1}^n$ to be 
better than IID in the sense of discrepancy.  For 
integration over $\calX = [0,1]^d$ with 
respect to the uniform measure, these \emph{low discrepancy} data sites may come from
\begin{itemize} 
\item a digital sequence \cite{DicPil10a}, such as that proposed by Sobol' \cite{Sob67}, 
Faure \cite{Fau82}, Niederreiter \cite{Nie88}, or Niederreiter and Xing \cite{NieXin96}, or 
\item a sequence of node sets of an integration lattice \cite{SloJoe94}.  
\end{itemize}
The 
constructions of such sets are described in the references above and L'Ecuyer's 
tutorial in this volume.  The $L^2$-discrepancy defined in \eqref{FJH:eq:L2disc} and its 
relatives are 
$\calO(n^{-1+\epsilon})$ as $n \to \infty$ for any positive $\epsilon$ for these low 
discrepancy data sites \cite{Nie92}.  

Fig.\ \ref{FJH:fig:plotsdiffpts} displays examples of IID and randomized low discrepancy 
data sites.  
Fig.\ \ref{FJH:fig:unwtdiscdiffpts} shows the rates of decay for the $L^2$-discrepancy 
for various dimensions.  The scaled discrepancy is the empirically computed root mean 
square discrepancy divided by its value for $n=1$.  Although the 
decay for the low discrepancy points is 
$\calO(n^{-1+\epsilon})$ for large enough $n$, the decay in Fig.\ 
\ref{FJH:fig:unwtdiscdiffpts} resembles $\calO(n^{-1/2})$ for large dimensions and 
modest $n$.    The scaled discrepancy for IID samples in Fig.\ 
\ref{FJH:fig:unwtdiscdiffpts} does not 
exhibit a dimension dependence because 
it is masked by the 
scaling.  The dimension 
dependence of the convergence of the discrepancy to zero is addressed later in Sec.\  
\ref{FJH:sec:Tract}. 

\begin{FJHLesson}
	\FJHLessonTwo
\end{FJHLesson}

\begin{figure}
	\centering
	\includegraphics[height=0.7\FJHfigheight]{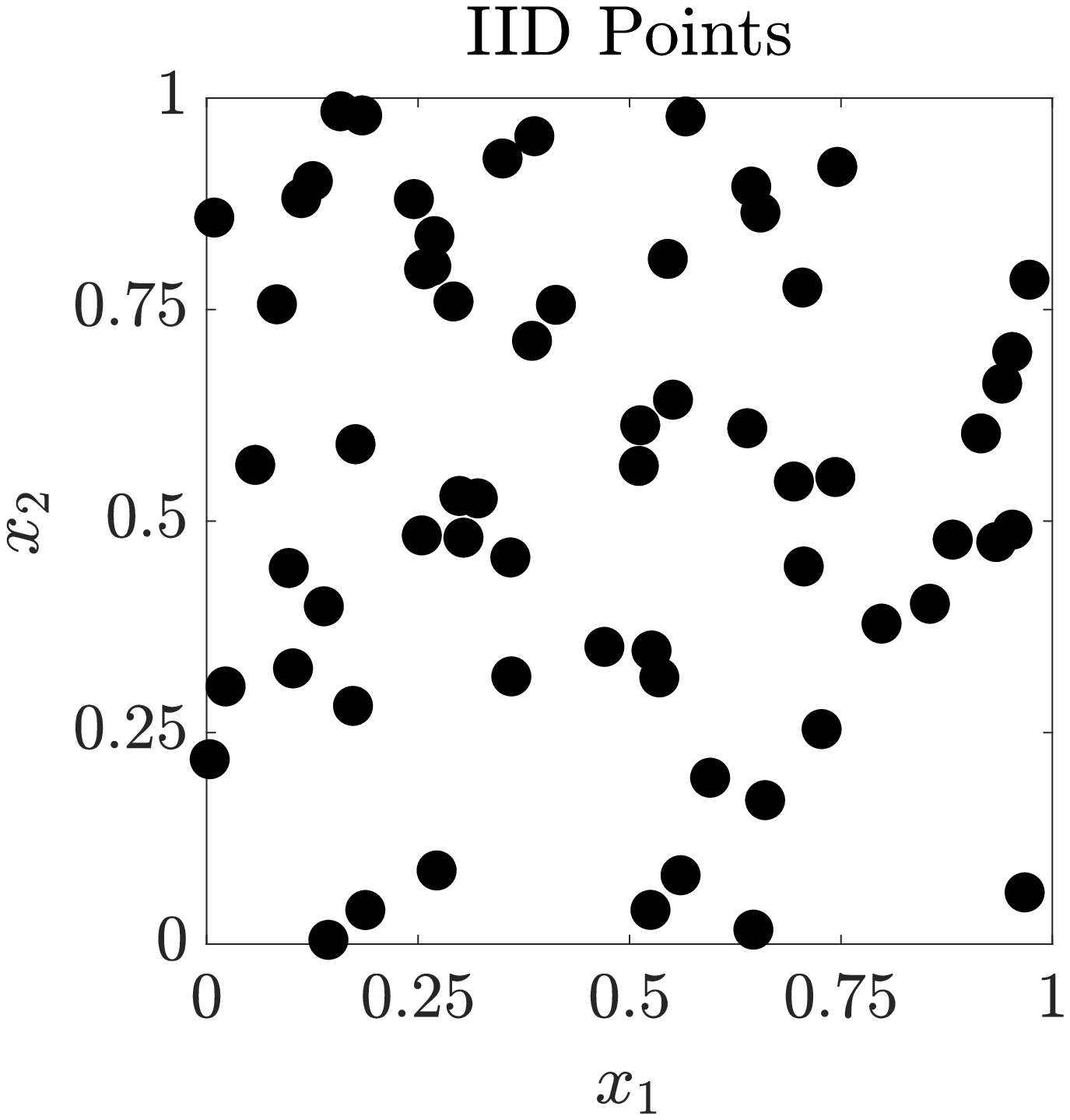} \ 
	\
	\includegraphics[height=0.7\FJHfigheight]{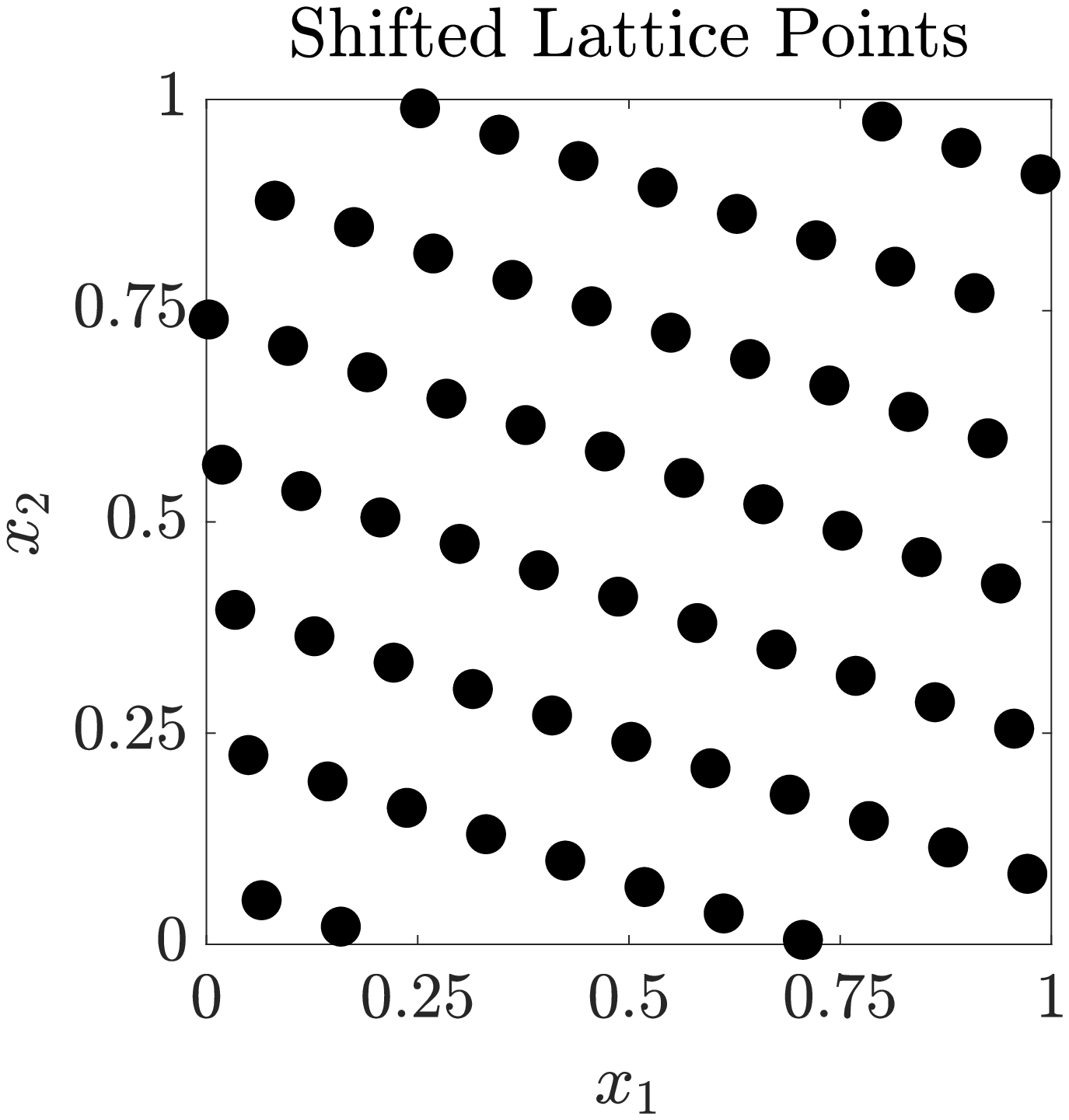} 
	\ \
	\includegraphics[height=0.7\FJHfigheight]{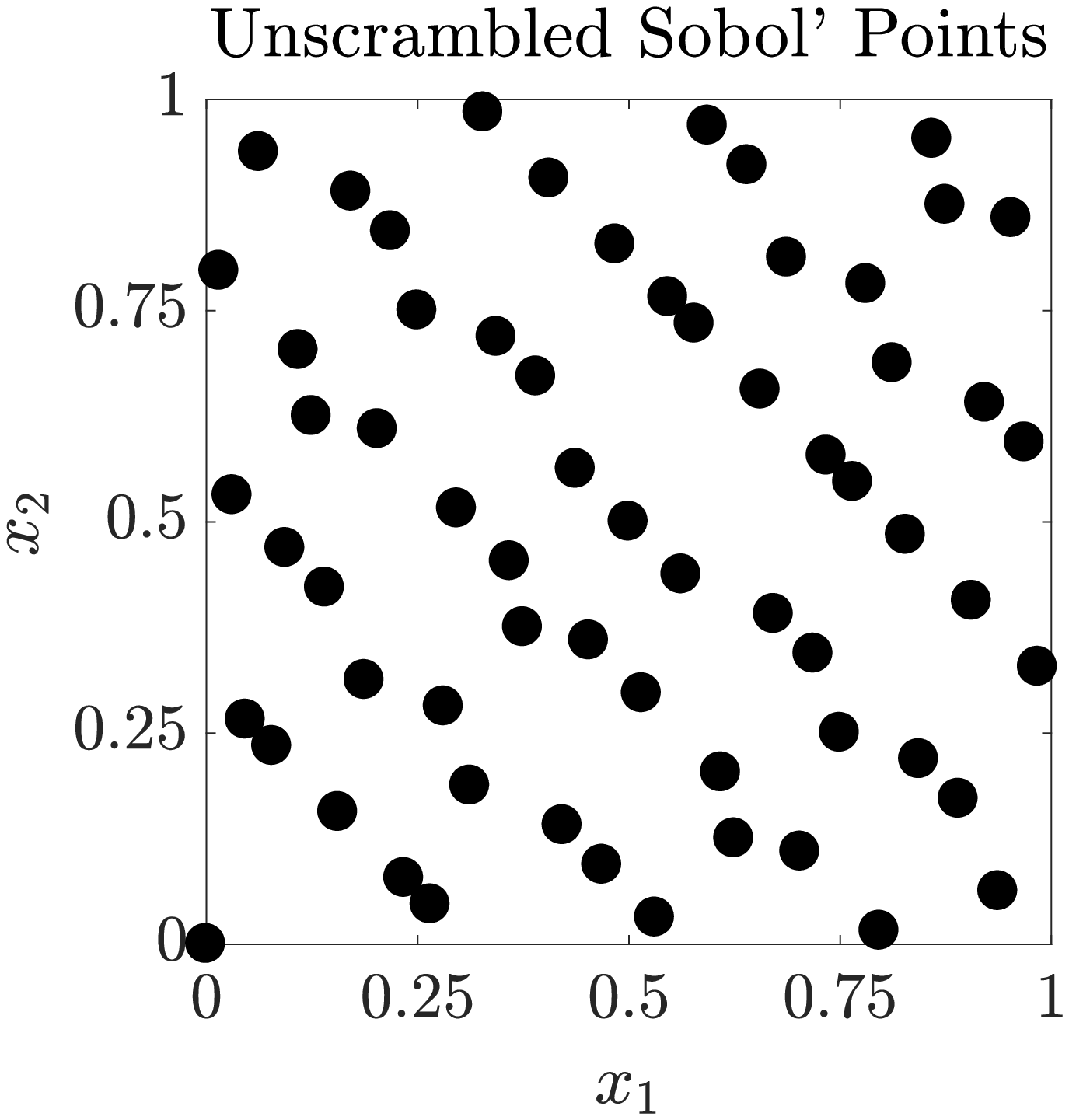} \ \
	\includegraphics[height=0.7\FJHfigheight]{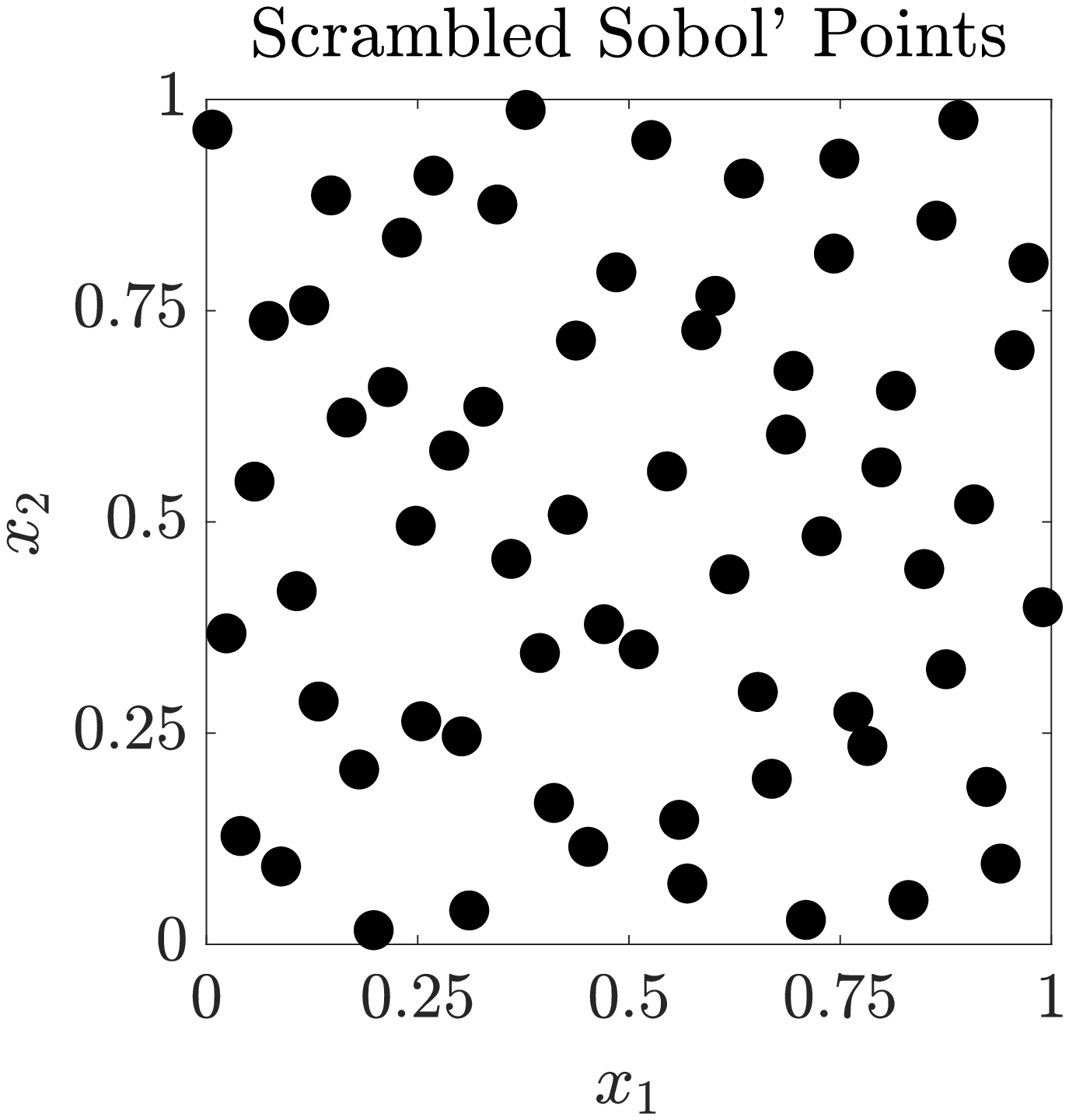}
	\caption{IID points and three examples of low discrepancy points 
	\label{FJH:fig:plotsdiffpts}}
\end{figure}

\begin{figure}
	\centering
	  \includegraphics[height=\FJHfigheight]{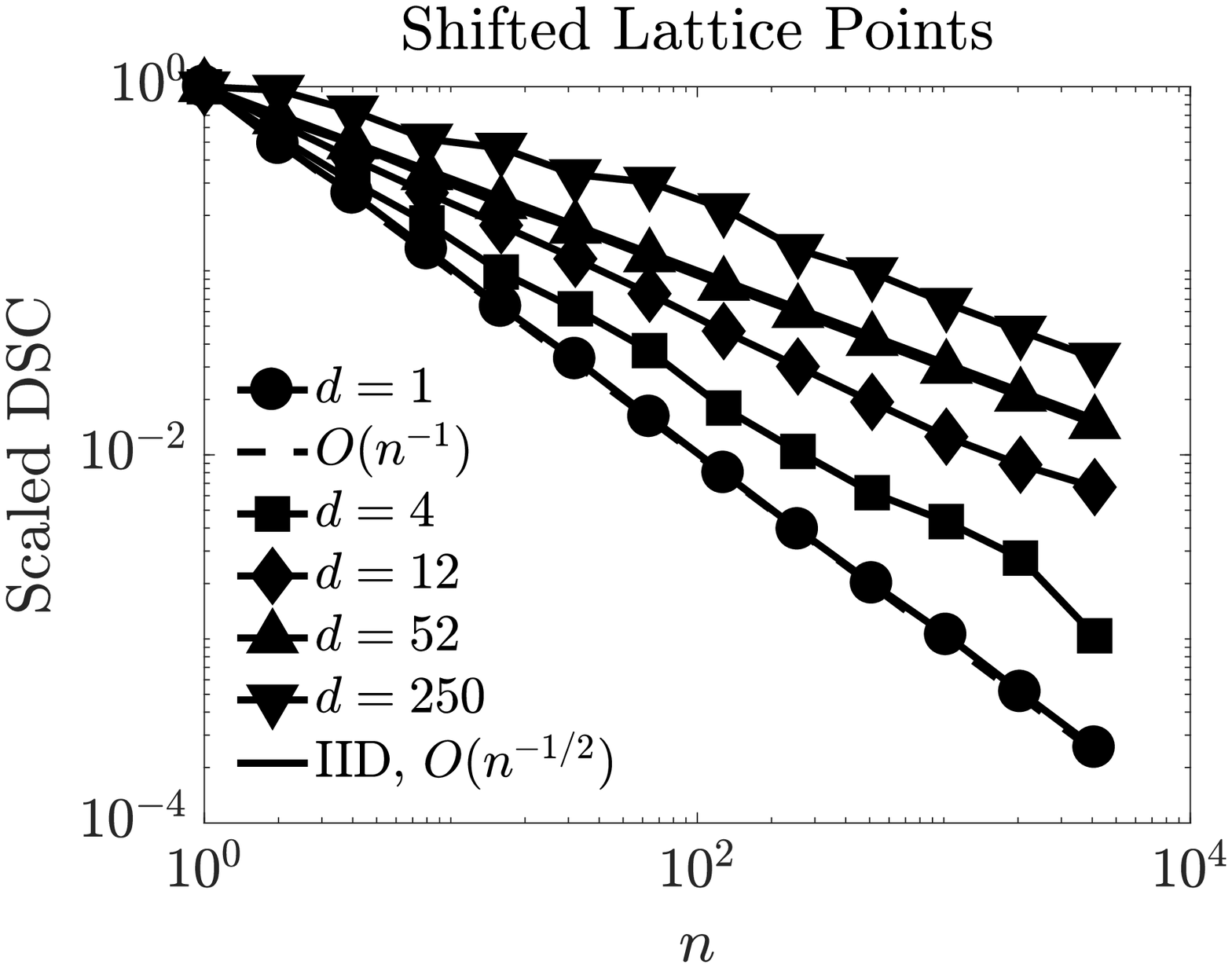}   
	  \qquad 
	  \includegraphics[height=\FJHfigheight]{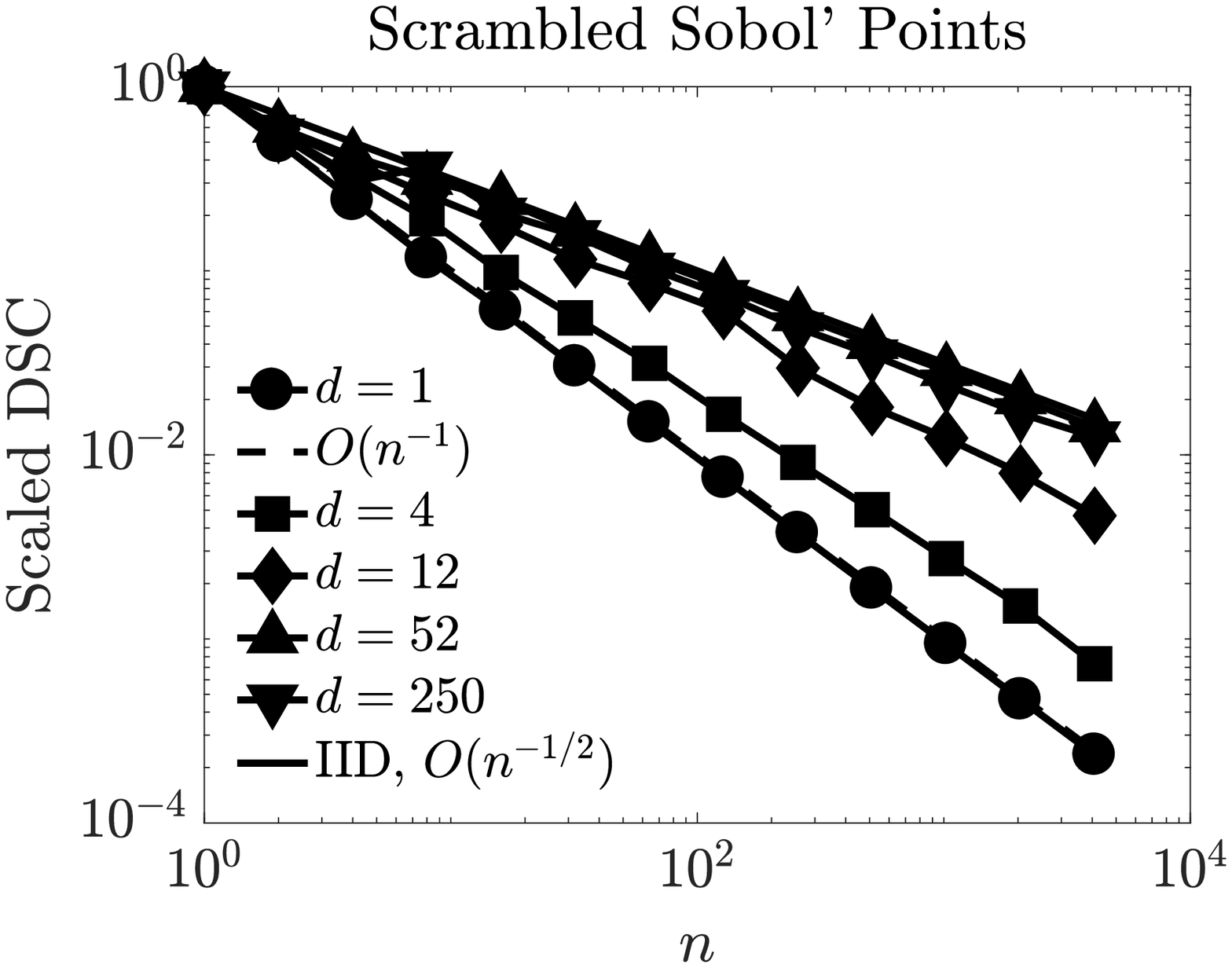} 
	\caption{The root mean square $L^2$-discrepancies given by \eqref{FJH:eq:L2disc} 
	for randomly shifted 
	lattice sequence nodesets and randomly scrambled and shifted Sobol' sequences 
	points for a variety of dimensions.
		\label{FJH:fig:unwtdiscdiffpts}}
\end{figure}

No sampling scheme can produce a faster convergence rate than 
$\calO(n^{-1})$ for the $L^2$-discrepancy.  This is due to the limited 
smoothness of the reproducing kernel defined in \eqref{FJH:eq:L2Kdef} and the 
corresponding limited smoothness of the corresponding Hilbert space of integrands.

\section{A Randomized Trio Identity for Cubature Error} \label{FJH:sec:rndtrio}
For the randomized version of the trio identity, we again assume that the integrands lie in 
a Banach space, $(\calF,\FJHnorm{\cdot}_{\calF})$.  This space is required to contain 
constant functions if $\{T(f) : f \in \calF\} \ne \{0\}$.  We assume
that $\int_{\calX} f(\bsx) \, \nu(\D\bsx)$ is defined  for all $f \in \calF$, however, we  
do not require 
function evaluation to be a bounded linear functional on $\calF$.  The definitions of the 
bounded linear functional $T$ and the variation in the deterministic case in  
\eqref{FJH:eq:detvardef} apply here as well.

Now endow the vector space of all sampling measures, $\calM_{\textup{S}}$, with a 
probability distribution.  This means that the placement of the data sites, the number of 
data sites, and the choice of the weights may all be random.  We require  
the following two conditions:
\begin{gather}
\nonumber
\bbE_{\widehat{\nu}} \FJHbiggabs{\int_{\calX} f(\bsx) \,  \widehat{\nu}(\D \bsx)}^2 < \infty 
\qquad \forall f \in 
\calF, 
\\
\label{FJH:eq:randMcondB}
\{T(f) : f \in \calF\} = \{0\}  \qquad \text{or} \qquad  \widehat{\nu}(\calX) = \nu(\calX) 
\text{ almost surely.} 
\end{gather}
The first condition implies that $\int_{\calX} f(\bsx) \,  \widehat{\nu}(\D \bsx)$ exists 
almost surely for every $f \in \calF$.  

The randomized discrepancy is  defined as the worst  normalized root mean squared 
error:
\begin{equation} \label{FJH:eq:rnddiscdef}
\FJHDSC^\textup{R}(\nu - \widehat{\nu}) : =\sup_{f \in \calF : f \ne 0} \frac{\displaystyle 
\sqrt{\bbE_{\widehat{\nu}}
\FJHbiggabs{\int_{\calX} 
		f(\bsx) \, (\nu - \widehat{\nu})(\D \bsx)}^2}}{\FJHnorm{f}_{\calF}}.
\end{equation}
The randomized discrepancy does not depend on the particular instance of the 
sampling measure but on the distribution of the sampling measure. 

Finally, define the confounding as 
\begin{equation} \label{FJH:eq:rndconfdef}
\FJHCNF^\textup{R}(f,\nu - \widehat{\nu}): =  \begin{cases} \displaystyle 
\frac{\displaystyle\int_{\calX} f(\bsx) \, (\nu - \widehat{\nu})(\D 
	\bsx)}{\FJHVAR^\textup{D}(f)\FJHDSC^\textup{R}(\nu - \widehat{\nu})},  & 
\FJHVAR^\textup{D}(f)\FJHDSC^\textup{R}(\nu - \widehat{\nu}) \ne 0, \\[2ex]
0, & \text{otherwise}.
\end{cases}
\end{equation}
Here, the confounding \emph{does} depend on the particular instance of the sampling 
measure.  The above definitions allow us to establish the randomized trio identity for 
cubature error.

\begin{theorem}[Randomized Trio Error Identity]  \label{FJH:thm:rtrio} For the spaces of 
integrands and 
	measures defined above, and for the above definitions of variation, discrepancy, and 
	confounding, the following error identity holds for all $f \in \calF$ and $\widehat{\nu}  
	\in 
	\calM_{\textup{S}}$: 
	\begin{equation} \tag{RTRIO} \label{FJH:eq:rtrio}
	\mu - \widehat{\mu}  = \FJHCNF^\textup{R}(f,\nu - \widehat{\nu}) \, 
	\FJHDSC^\textup{R}(\nu - \widehat{\nu}) \, 
	\FJHVAR^{\textup{D}}(f) \quad 
	\text{almost surely}.
	\end{equation}
	Moreover, $\bbE_{\widehat{\nu}} \FJHabs{\FJHCNF^\textup{R}(f,\nu - 
	\widehat{\nu})}^2  \le 1$ for all $f \in 
	\calF$. 
\end{theorem}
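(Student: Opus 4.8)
The plan is to mirror the structure of the deterministic proof (Theorem~\ref{FJH:thm:dtrio}), since the randomized identity is really the same algebraic identity interpreted pathwise, together with a second-moment bound replacing the pointwise bound on the confounding. First I would observe that the error-as-integral representation \eqref{FJH:eq:err_as_int}, namely $\mu - \widehat{\mu} = \int_{\calX} f(\bsx)\,(\nu - \widehat{\nu})(\D\bsx)$, still holds for each realization of the random sampling measure $\widehat{\nu} \in \calM_{\textup{S}}$. The only subtlety is that, in the randomized setting, function evaluation is \emph{not} assumed to be a bounded linear functional on $\calF$; however, the first condition in \eqref{FJH:eq:randMcondB} guarantees that $\int_{\calX} f(\bsx)\,\widehat{\nu}(\D\bsx)$ exists almost surely, so the right-hand integral is well defined a.s.\ and the representation holds almost surely. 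This is what forces the ``almost surely'' qualifier in \eqref{FJH:eq:rtrio}.

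Next I would split into the degenerate and non-degenerate cases exactly as before. If $\FJHVAR^\textup{D}(f)\,\FJHDSC^\textup{R}(\nu - \widehat{\nu}) = 0$, then either $f = T(f)$ (so the integrand is its own typical value and the error integral vanishes by the second condition in \eqref{FJH:eq:randMcondB}, which ensures $\nu - \widehat{\nu}$ annihilates constants almost surely) or the randomized discrepancy vanishes (so by \eqref{FJH:eq:rnddiscdef} the root mean square error is zero, forcing $\mu - \widehat{\mu} = 0$ almost surely); in either subcase both sides of \eqref{FJH:eq:rtrio} are zero and the confounding is defined to be $0$. If instead $\FJHVAR^\textup{D}(f)\,\FJHDSC^\textup{R}(\nu - \widehat{\nu}) \ne 0$, the identity \eqref{FJH:eq:rtrio} is immediate by clearing the denominator in the definition \eqref{FJH:eq:rndconfdef} of $\FJHCNF^\textup{R}$.

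For the moment bound, I would compute the expected square of the confounding directly. Using \eqref{FJH:eq:rndconfdef} together with the fact (from \eqref{FJH:eq:detvardef} and the second condition in \eqref{FJH:eq:randMcondB}) that $\int_{\calX} f(\bsx)\,(\nu - \widehat{\nu})(\D\bsx) = \int_{\calX} [f(\bsx) - T(f)]\,(\nu - \widehat{\nu})(\D\bsx)$ almost surely, one gets
\begin{equation*}
\bbE_{\widehat{\nu}} \FJHabs{\FJHCNF^\textup{R}(f,\nu - \widehat{\nu})}^2
= \frac{\bbE_{\widehat{\nu}} \FJHbiggabs{\int_{\calX} [f(\bsx) - T(f)]\,(\nu - \widehat{\nu})(\D\bsx)}^2}{[\FJHVAR^\textup{D}(f)]^2\,[\FJHDSC^\textup{R}(\nu - \widehat{\nu})]^2}.
\end{equation*}
Since $\FJHVAR^\textup{D}(f) = \FJHnorm{f - T(f)}_{\calF}$ and $f - T(f) \ne 0$ in this case, the supremum defining $\FJHDSC^\textup{R}$ in \eqref{FJH:eq:rnddiscdef} gives, when applied to the test function $f - T(f)$, the inequality $\bbE_{\widehat{\nu}} \bigl\lvert \int_{\calX}[f - T(f)]\,\D(\nu - \widehat{\nu})\bigr\rvert^2 \le [\FJHDSC^\textup{R}(\nu - \widehat{\nu})]^2\,\FJHnorm{f - T(f)}_{\calF}^2$, so the ratio is at most $1$. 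In the degenerate case the confounding is zero, so the bound holds trivially.

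The main obstacle is purely book-keeping around the ``almost surely'' clauses: one must be careful that the a.s.\ existence of the integral (first condition) and the a.s.\ annihilation of constants by $\nu - \widehat{\nu}$ (second condition) are invoked before asserting the error representation and before replacing $f$ by $f - T(f)$, rather than treating these as deterministic facts. Everything else reduces to substituting the definitions \eqref{FJH:eq:rnddiscdef} and \eqref{FJH:eq:rndconfdef}, with the supremum in the randomized discrepancy doing the work that the supremum in \eqref{FJH:eq:detdiscdef} did in the deterministic proof; the pointwise Cauchy--Schwarz / cosine argument is here replaced by the second-moment normalization built into $\FJHDSC^\textup{R}$.
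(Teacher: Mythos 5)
Your proposal is correct and follows essentially the same route as the paper's proof: the a.s.\ error-as-integral representation, the case split on whether $\FJHVAR^\textup{D}(f)\,\FJHDSC^\textup{R}(\nu-\widehat{\nu})$ vanishes, and the second-moment bound obtained by replacing $f$ with $f-T(f)$ via \eqref{FJH:eq:randMcondB} and then invoking the supremum in \eqref{FJH:eq:rnddiscdef}. Your handling of the ``almost surely'' qualifiers is, if anything, slightly more explicit than the paper's.
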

\begin{proof}  For all $f \in \calF$ and $\widehat{\nu}  \in \calM_{\textup{S}}$, the error 
can be 
written as the single integral in \eqref{FJH:eq:err_as_int} almost surely. 	If 
$\FJHVAR^{\textup{D}}(f) 
= 0$, then $f = T(f)$, and $\mu - \widehat{\mu}$ vanishes almost surely by
\eqref{FJH:eq:randMcondB}.  If 
$\FJHDSC^\textup{R}(\nu - \widehat{\nu}) = 0$, then $\mu - \widehat{\mu}$
	vanishes almost surely by \eqref{FJH:eq:rnddiscdef}.  Thus, for 
	$\FJHVAR^{\textup{D}}(f) 
	\FJHDSC^\textup{R}(\nu - \widehat{\nu}) = 
	0$ 
	the trio identity holds. If $\FJHVAR^{\textup{D}}(f) \FJHDSC^\textup{R}(\nu - 
	\widehat{\nu}) \ne 
	0$, 
	then the 
	trio 
	identity also holds by the definition of the confounding.
	
	Next, we analyze the magnitude of the confounding for $\FJHVAR^{\textup{D}}(f) 
	\FJHDSC^\textup{D}(\nu - 
	\widehat{\nu}) \ne 0$: 
	\begin{align*}
	\bbE \FJHabs{\FJHCNF^\textup{R}(f,\nu - \widehat{\nu})}^2 & = 
	\frac{\bbE \biggl \lvert\displaystyle\int_{\calX} f(\bsx) \, (\nu - \widehat{\nu})(\D 
		\bsx) \biggr \rvert^2 }{[\FJHVAR^\textup{D}(f)\FJHDSC^\textup{D}(\nu - 
		\widehat{\nu})]^2} 
		\quad \text{by 
		\eqref{FJH:eq:rndconfdef}}\\
	& = \frac{\bbE \biggl \lvert\displaystyle\int_{\calX} [f(\bsx) - T(f)] \, (\nu - 
	\widehat{\nu})(\D 
		\bsx) \biggr \rvert^2}{[\FJHnorm{f-T(f)}_{\calF}\FJHDSC^\textup{D}(\nu - 
		\widehat{\nu})]^2} 
		\quad 
		\text{by 
		\eqref{FJH:eq:detvardef} and \eqref{FJH:eq:randMcondB}} \\
	& \le 1 \quad \text{by \eqref{FJH:eq:rnddiscdef}},
	\end{align*}
	since $\FJHVAR^{\textup{D}}(f) \ne 0$ and so $f - T(f) \ne 0$.
\end{proof}

Consider simple Monte Carlo, where the approximation to the integral is an equally 
weighted average using IID sampling $\bsx_1, \bsx_2, \ldots \sim 
\nu$. Let the sample size be fixed at $n$.
Let $\calF = L^{2,\nu}(\calX)$, the space of functions that are square integrable with 
respect to 
the measure $\nu$, and let $T(f)$ be the mean of $f$.  Then the variation of $f$ is just 
its standard 
deviation, $\textup{std}(f)$.  The randomized discrepancy is 
$1/\sqrt{n}$.  The randomized 
confounding is 
\begin{align*}
\FJHCNF^\textup{R}(f,\nu - \widehat{\nu}) = \frac{-1}{ \sqrt{n}\, \textup{std}(f)}    
\sum_{i=1}^n 
[f(\bsx_i) - 
\mu].
\end{align*}

Unlike the deterministic setting, there is no simple expression  for the randomized 
discrepancy under general sampling measures and RKHSs.  The 
randomized discrepancy can sometimes be conveniently calculated or bounded for 
spaces of integrands that are represented by series expansions, and the randomized 
sampling measures for the bases of these expansions have special properties 
\cite{HeiHicYue02a, HicWoz00a}.

It is instructive to contrast the variation, discrepancy, and confounding in the 
deterministic and randomized settings.   For some integrand, $f$, and some sampling 
measure, $\widehat{\nu}$, satisfying the 
conditions defining both \eqref{FJH:eq:dtrio}  and \eqref{FJH:eq:rtrio}:
\begin{itemize}
	\item the variation in both settings is the same,
	
	\item the randomized discrepancy 
	must be \emph{no greater} than the deterministic discrepancy by definition, and thus
	
	\item the randomized confounding must be \emph{no less} than the deterministic 
	confounding.  
\end{itemize}
The deterministic confounding is never greater 
than one in magnitude.  By contrast, the randomized confounding may be arbitrarily 
large.  However, Markov's 
inequality implies that it may be larger than $1/\sqrt{\alpha}$ with  probability no greater 
than $\alpha$.  The next section illustrates the differences in the 
deterministic and randomized trio identities. 

\section{Multivariate Gaussian Probabilities} \label{FJH:sec:Gauss}
Consider the $d$-variate integral corresponding to the probability of a 
$\calN(\bszero,\mathsf{\Sigma})$ random variable lying inside the box $[\bsa,\bsb]$:
\begin{equation} \label{FJH:eq:MVN}
\mu = \int_{[\bsa,\bsb]} \frac{\exp\bigl(- \frac 12 \bsz^T \mathsf{\Sigma}^{-1} \bsz 
\bigr)}{\sqrt{(2 \pi)^d \det(\mathsf{\Sigma})}} \, \D \bsz = \int_{[0,1]^{d-1}} 
f_{\textup{Genz}}(\bsx) \, \D 
\bsx,
\end{equation}
where $\mathsf{\Sigma} = \mathsf{L}\mathsf{L}^T$ is the Cholesky decomposition of 
the covariance matrix, $\mathsf{L} = \bigl(l_{jk}\bigr)_{j,k=1}^d$, is a lower triangular 
matrix, and
\begin{align*}
\nonumber
\alpha_1 & = \Phi(a_1), \qquad \beta_1  = \Phi(b_1), \\
\nonumber
\alpha_j(x_1, \ldots, x_{j-1}) &= \Phi\left(\frac{1}{l_{jj}} \left(a_j - \sum_{k=1}^{j-1} 
l_{jk}\Phi^{-1}(\alpha_k + x_k(\beta_k-\alpha_k))\right)\right), \ j =2, \ldots, d,\\
\nonumber
\beta_j(x_1, \ldots, x_{j-1}) &= \Phi\left(\frac{1}{l_{jj}} \left(b_j - \sum_{k=1}^{j-1} 
l_{jk}\Phi^{-1}(\alpha_k + x_k(\beta_k-\alpha_k))\right)\right), \ j =2, \ldots, d, \\
f_{\textup{Genz}}(\bsx) &= \prod_{j=1}^{d} [\beta_j(\bsx) - \alpha_j(\bsx)]. 
\end{align*}
Here, $\Phi$ represents the cumulative distribution function for a standard normal 
random variable.  Genz \cite{Gen93} developed this clever transformation of variables 
above.  Not only is the 
dimension decreased by one, but the integrand is typically made less peaky and more 
favorable to cubature methods.

The left plot of Fig.\ \ref{FJH:fig:MVNfig} shows the absolute errors in computing the 
multivariate 
Gaussian probability via the Genz transformation for 
\[
   \bsa  = \begin{pmatrix}
   -6 \\ -2 \\ -2
   \end{pmatrix}, \quad
      \bsb  = \begin{pmatrix}
   5 \\ 2 \\ 1
   \end{pmatrix}, \quad
   \mathsf{\Sigma} = \begin{pmatrix} 16 & 4 & 4 \\ 4 &  2 &  1.5 \\
  4 & 1.5 &  1.3125 \end{pmatrix}, \quad
   \mathsf{L} = \begin{pmatrix} 4 & 0 & 0 \\ 1 &  1 &  0 \\
1 & 0.5 &  0.25 \end{pmatrix}, 
\]
by IID sampling, unscrambled Sobol' sampling, and scrambled Sobol' sampling   
\cite{Owe95, Owe96, Owe97}.
Multiple random scramblings of a very large scrambled Sobol' set were used to infer that 
$\mu \approx 0.6763373243578$.   For the two randomized sampling measures $100$ 
replications were taken.  The marker denotes the median error and the top of the stem 
extending above the marker denotes the  $90\%$ quantile of the error.

Empirically, the error for scrambled Sobol' sampling appears to be  tending towards a 
convergence rate 
of $\calO(n^{-2})$.  This is a puzzle.  It is unknown why this should be or whether this 
effect is only transient.  In the discussion below we assume the expected rate of 
$\calO(n^{-3/2 + \epsilon})$.

\begin{figure}
	\centering
	\includegraphics[height = \FJHfigheight]{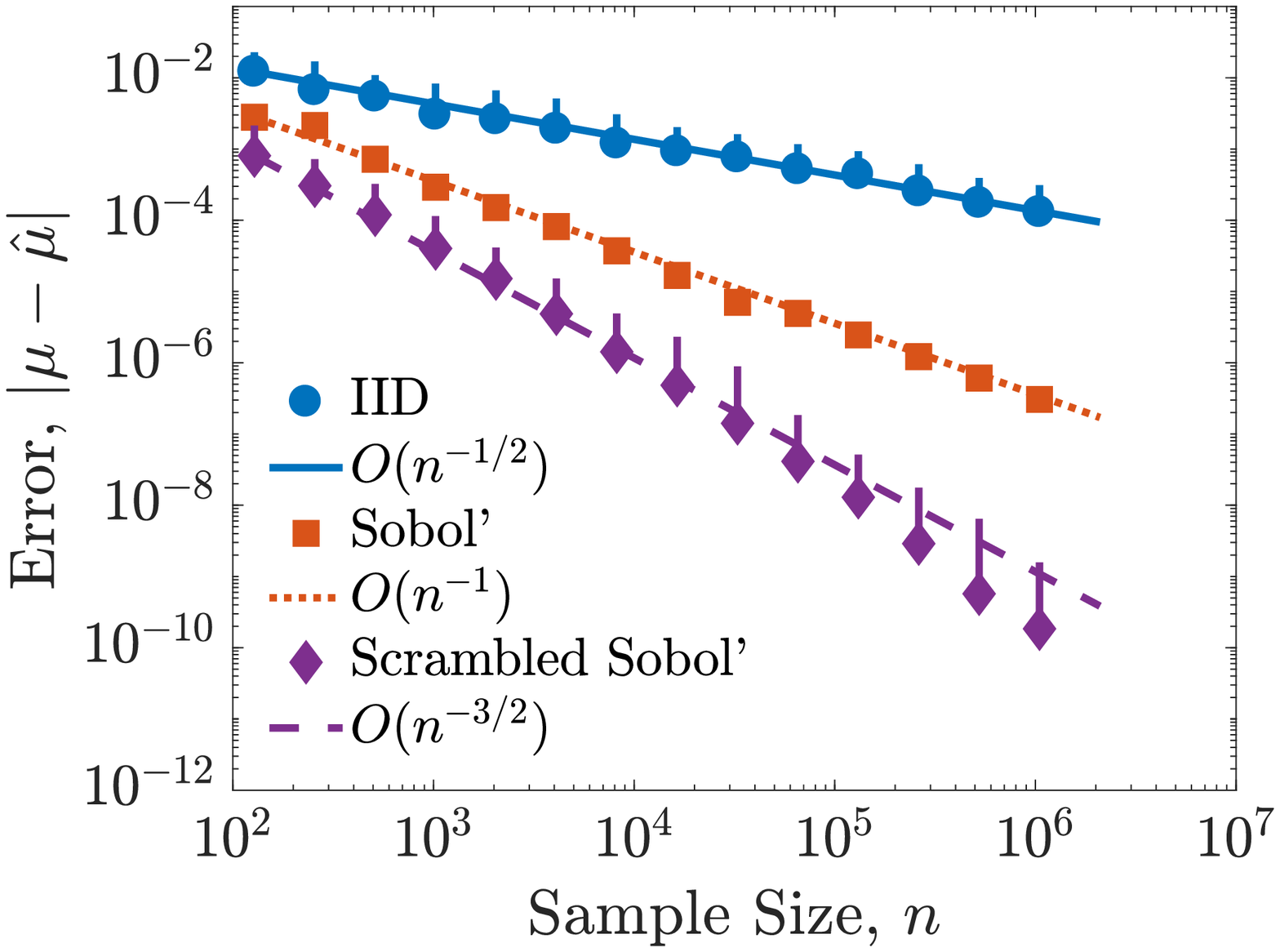} 
	\qquad 
	\includegraphics[height = \FJHfigheight]{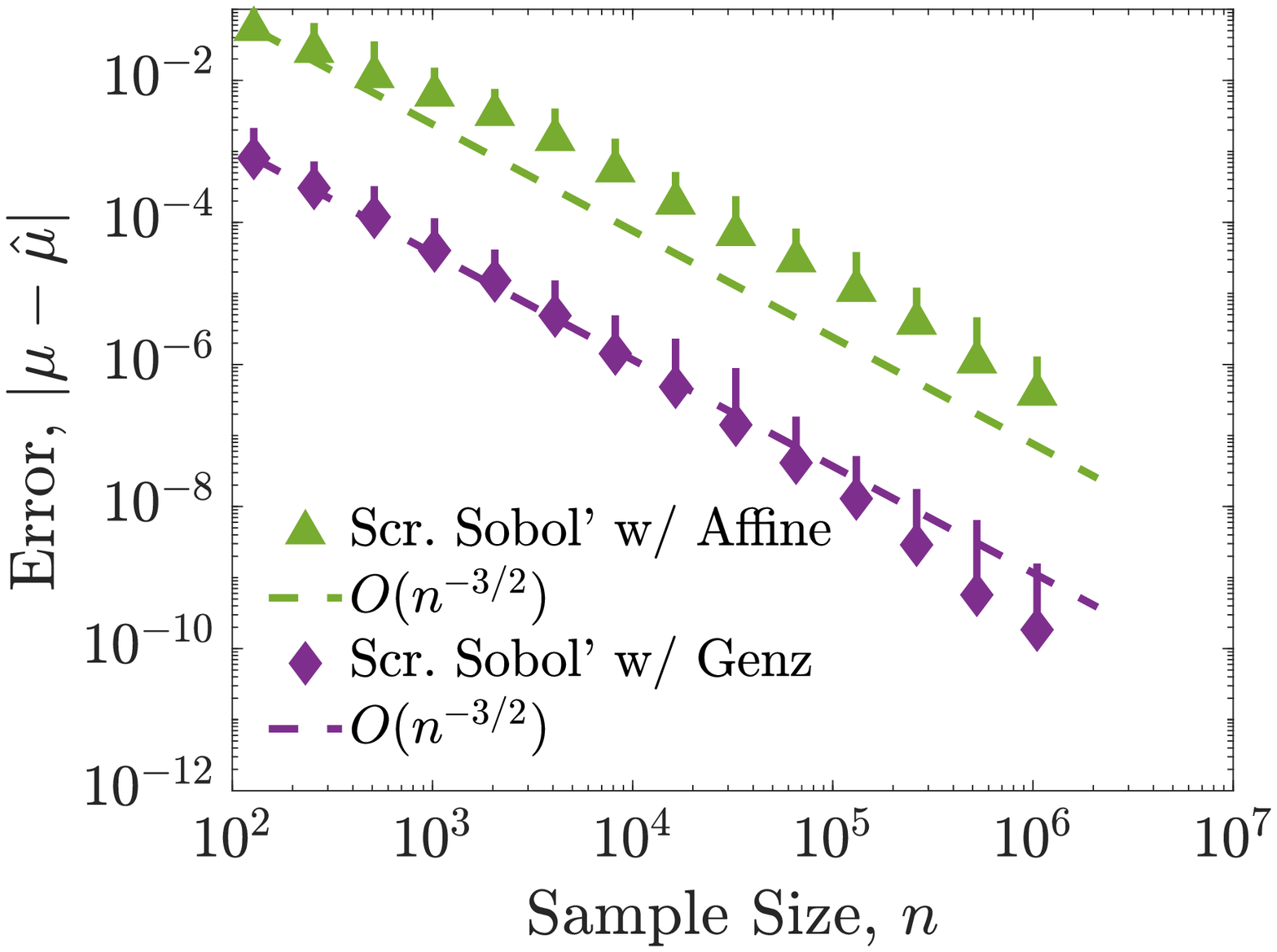}
	\caption{The error of an example of the multivariate Gaussian probability in 
	\eqref{FJH:eq:MVN}.  The left side shows the result of Genz's transformation and 
	different sampling measures.  The right side shows scrambled Sobol' sampling
	using different transformations.
	\label{FJH:fig:MVNfig}}
\end{figure}

The orders of the discrepancy and confounding in Table \ref{FJH:tab:Conf} explain the 
rates of decay of the error and the 
benefits of randomization.  Note that in all cases 
\[
\mu - \widehat{\mu} \text{ decay rate} = \FJHCNF \text{ decay/growth rate} \times 
\FJHDSC
\text{ decay rate}.
\]
We consider  equally 
weighted cubature rules for two  kinds 
of random sampling measures, IID and scrambled Sobol', and for both the deterministic 
and randomized settings.  Here, $\calF$ is assumed to be the 
RKHS used to define the $L^2$-discrepancy.

\begin{table}
	\caption{Confounding orders for deterministic randomized settings and two
		different sets of equi-weighted random
		sampling measures. Sufficient smoothness of the integrand is assumed.
		The order of the error equals the order of the discrepancy times the 
		order of the confounding. \label{FJH:tab:Conf}}
	\begin{equation*}
	\begin{array}{r@{\ \ }c@{\quad}r@{\ \ }|c@{\ \ }c@{\quad}c@{\ \ }c@{\ \ }c}
    & \textup{RMS} &&&&\text{Typical} &  \textup{Unscr.} 
	&\text{Typical}\\
	\textbf{Deterministic setting} & L^2\text{-}\FJHDSC^\textup{D} & \widehat{\nu} &&  
	\textup{Worst} &  
	\textup{IID} & \textup{Sobol'}&  \textup{Scr.\ Sobol'}\\
	\hline \multicolumn{3}{c|}{} &  \\ [-2ex]
	\textup{IID Sampling} & \calO(n^{-1/2}) & \FJHCNF^\textup{D} &&  & \calO(1) 
	&\calO(n^{-1/2+\epsilon}) &\calO(n^{-1+\epsilon}) \\
	\textup{Scr.\ Sobol' Sampling} & \calO(n^{-1+\epsilon}) & \FJHCNF^\textup{D} 
	&& & &\calO(1)&\calO(n^{-1/2+\epsilon}) \\
	&& \mu - \widehat{\mu} && \calO(1) & \calO(n^{-1/2}) & 
	\calO(n^{-1+\epsilon}) & \calO(n^{-3/2+\epsilon})\\
\textbf{Randomized setting} & \FJHDSC^\textup{R} & \\
\cline{1-3}
\multicolumn{3}{c|}{} &  \\ [-2ex]
\textup{IID Sampling} & \calO(n^{-1/2}) & \FJHCNF^\textup{R} && \calO(n^{1/2})  & 
\calO(1) 
&\calO(n^{-1/2+\epsilon}) &\calO(n^{-1+\epsilon}) \\
\textup{Scr.\ Sobol' Sampling} &\calO(n^{-3/2+\epsilon}) & \FJHCNF^\textup{R} 
&& & &\calO(n^{1/2+\epsilon})&\calO(1) 
	\end{array}
	\end{equation*}
\end{table}

For IID sampling both the root 
mean square $L^2$-discrepancy and the randomized discrepancy are 
$\calO(n^{-1/2})$.  The confounding for typical IID sampling is $\calO(1)$.  In the 
randomized setting one may have an atypically poor instance of data sites 
that leads to an atypically high confounding of $\calO(n^{1/2})$.  On the other hand,  
unscrambled Sobol' sampling and scrambled Sobol' sampling are atypically superior 
instances 
of data sites under an IID sampling measure that yield atypically small confoundings of  
$\calO(n^{-1/2+\epsilon})$ and $\calO(n^{-1+\epsilon})$, respectively.

For scrambled Sobol' sampling, the root 
mean square $L^2$-discrepancy is now only $\calO(n^{-1+\epsilon})$, an improvement 
over IID sampling.  However, the 
randomized discrepancy is an even smaller 
$\calO(n^{-3/2+\epsilon})$ \cite{HeiHicYue02a,Owe97}. 
In the deterministic setting, 
unscrambled Sobol' sampling has a typical $\calO(1)$ confounding, whereas typical 
scrambled Sobol' sampling has an atypically low $\calO(n^{-1/2})$ confounding.   This is 
because 
scrambled Sobol' sampling can take advantage of the additional smoothness of the given
integrand, which is not reflected in the definition of $\calF$.  In the randomized setting,  
unscrambled Sobol' sampling 
has an atypically high $\calO(n^{1/2})$ confounding.
Thus, unscrambled Sobol' sampling is among the awful minority of sampling measures 
under scrambled Sobol' sampling.

\begin{FJHLesson}
\FJHLessonThree
\end{FJHLesson}

\begin{FJHLesson} \label{FJH:Lesson:Seven}
	\FJHLessonSeven
\end{FJHLesson}

An alternative to the Genz transformation above is an affine transformation to compute 
the multivariate Gaussian probability:
\begin{gather*}
\bsz = \bsa + (\bsb-\bsa) \circ \bsx, \quad f_{\textup{aff}}(\bsx) =  \frac{\exp\bigl(- 
\frac 12 \bsz^T
\mathsf{\Sigma}^{-1} \bsz 
	\bigr)}{\sqrt{(2 \pi)^d \det(\mathsf{\Sigma})}} \, \prod_{j=1}^d (b_j - a_j), \\
\nonumber
\mu = 
	\int_{[0,1]^{d}} f_{\textup{aff}}(\bsx) \, \D \bsx,
\end{gather*} 
where $\circ$ denotes the Hadamard (term-by-term) product.  The right plot in Fig.\ 
\ref{FJH:fig:MVNfig} shows that the error using the affine 
transformation is much worse than that using the Genz transformation even though the 
two convergence rates are the same.  The difference in the magnitudes of the errors is 
primarily because $\FJHVAR^{\textup{D}}(f_{\textup{aff}})$ is greater than 
$\FJHVAR^{\textup{D}}(f_{\textup{Genz}})$.


\begin{FJHLesson} \label{FJH:eq:lessonfour}
	\FJHLessonFour
\end{FJHLesson}

\section{Option Pricing} \label{FJH:sec:OptPrice}
The prices of financial derivatives can often be modeled by high dimensional integrals.  If 
the underlying asset is described in terms of a Brownian motion, $B$, at times $t_1, 
\ldots, t_d$, then $\bsZ = (B(t_1), \ldots, B(t_d)) \sim \calN(\bszero, \mathsf{\Sigma})$, 
where $\mathsf{\Sigma}  = \bigl( \min(t_j,t_k) \bigr)_{j,k=1}^d$, and the fair price of the 
option is
\begin{equation*}
\mu = \int_{\R^d} \textup{payoff}(\bsz) \, \frac{\exp\bigl(- \frac 12 \bsz^T 
\mathsf{\Sigma}^{-1} 
\bsz 
\bigr)}{\sqrt{(2 \pi)^d \det(\mathsf{\Sigma})}} \, \D \bsz = \int_{[0,1]^{d}} f(\bsx) \, \D \bsx ,
\end{equation*}
where the function $\textup{payoff}(\cdot)$ describes the discounted payoff of the 
option, 
\begin{equation*}
f(\bsx) = \textup{payoff}(\bsz), \qquad \bsz = \mathsf{L} \begin{pmatrix}
\Phi^{-1}(x_1) \\ \vdots \\ \Phi^{-1}(x_d)
\end{pmatrix}.
\end{equation*}
In this example, $\mathsf{L}$ may be any square matrix satisfying $\mathsf{\Sigma} = 
\mathsf{L} 
\mathsf{L}^T$.

Fig.\ \ref{FJH:fig:AsianOpt} shows the cubature error using IID sampling, unscrambled 
Sobol' sampling, and scrambled Sobol' sampling for the Asian arithmetic mean call option 
with the following parameters:
\begin{gather*}
\textup{payoff}(\bsz) = \max \left( \frac 1d \sum_{j=1}^d S_j - K, 0\right)\E^{-r\tau}, \quad
S_j = S_0 \exp\bigl( (r - \sigma^2/2) t_j + \sigma z_j\bigr),
\\
\tau = 1, \quad d = 12, \quad S_0 = K = 100, \quad r =  0.05, \quad \sigma = 0.5, \\
t_j = j \tau /d, \quad j = 1\!:\!d.
\end{gather*}
The convergence rates for IID and unscrambled Sobol' sampling are the same as in Fig.\ 
\ref{FJH:fig:MVNfig} for the previous example of multivariate probabilities.  However, 
for this example scrambling the Sobol' set improves the accuracy but not the 
convergence rate. The convergence rate for scrambled Sobol' sampling, 
$\widehat{\nu}$,  is 
poorer than hoped for 
because 
$f$ is not smooth enough for $\FJHVAR^{\textup{D}}(f)$ to be finite in the case 
where $\FJHDSC^\textup{R}(\nu - \widehat{\nu}) = \calO(n^{-3/2 +\epsilon})$.

\begin{figure}
	\centering
		\includegraphics[height = \FJHfigheight] 
		{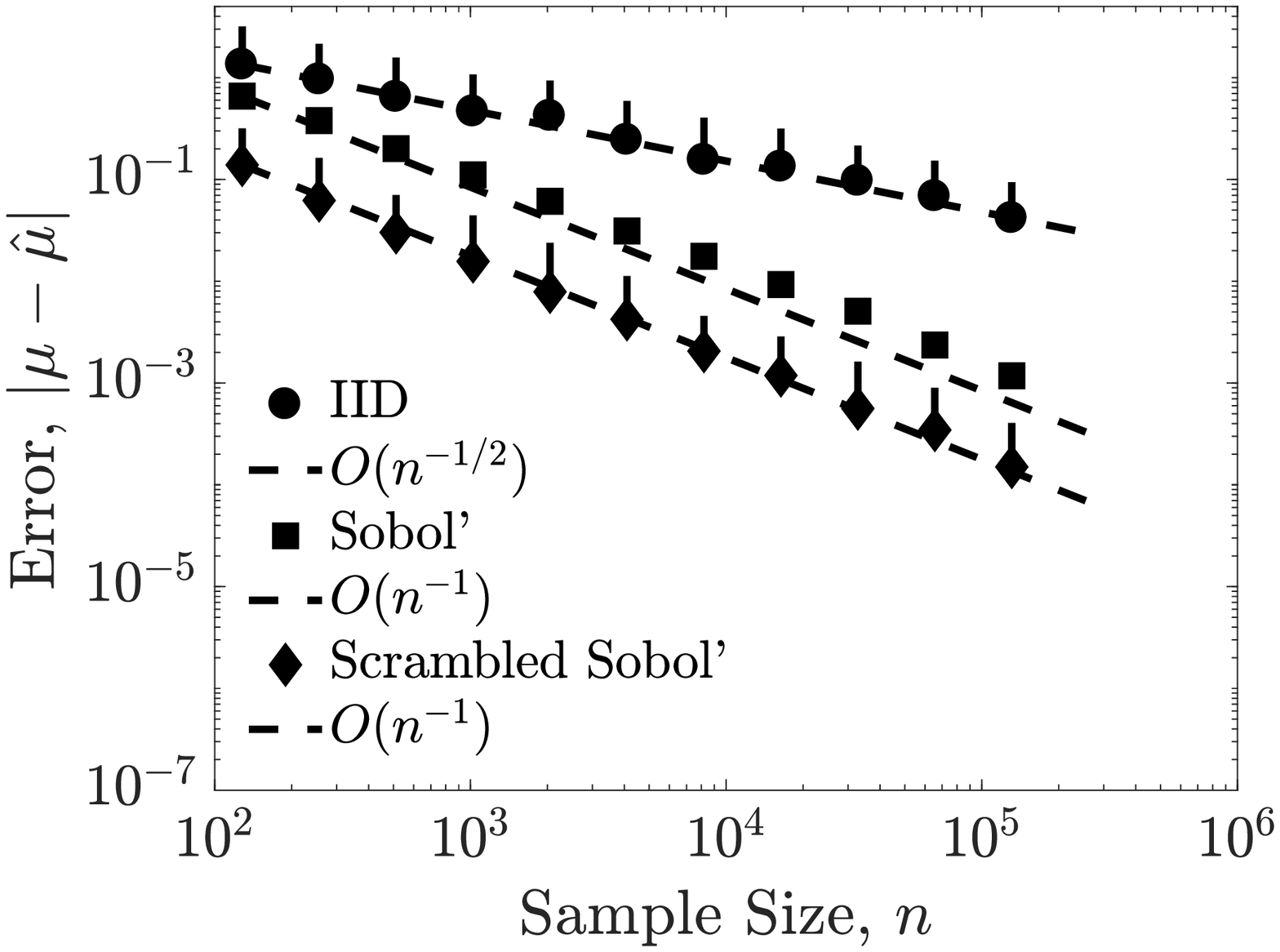} 
		\qquad
		\includegraphics[height = \FJHfigheight] 
		{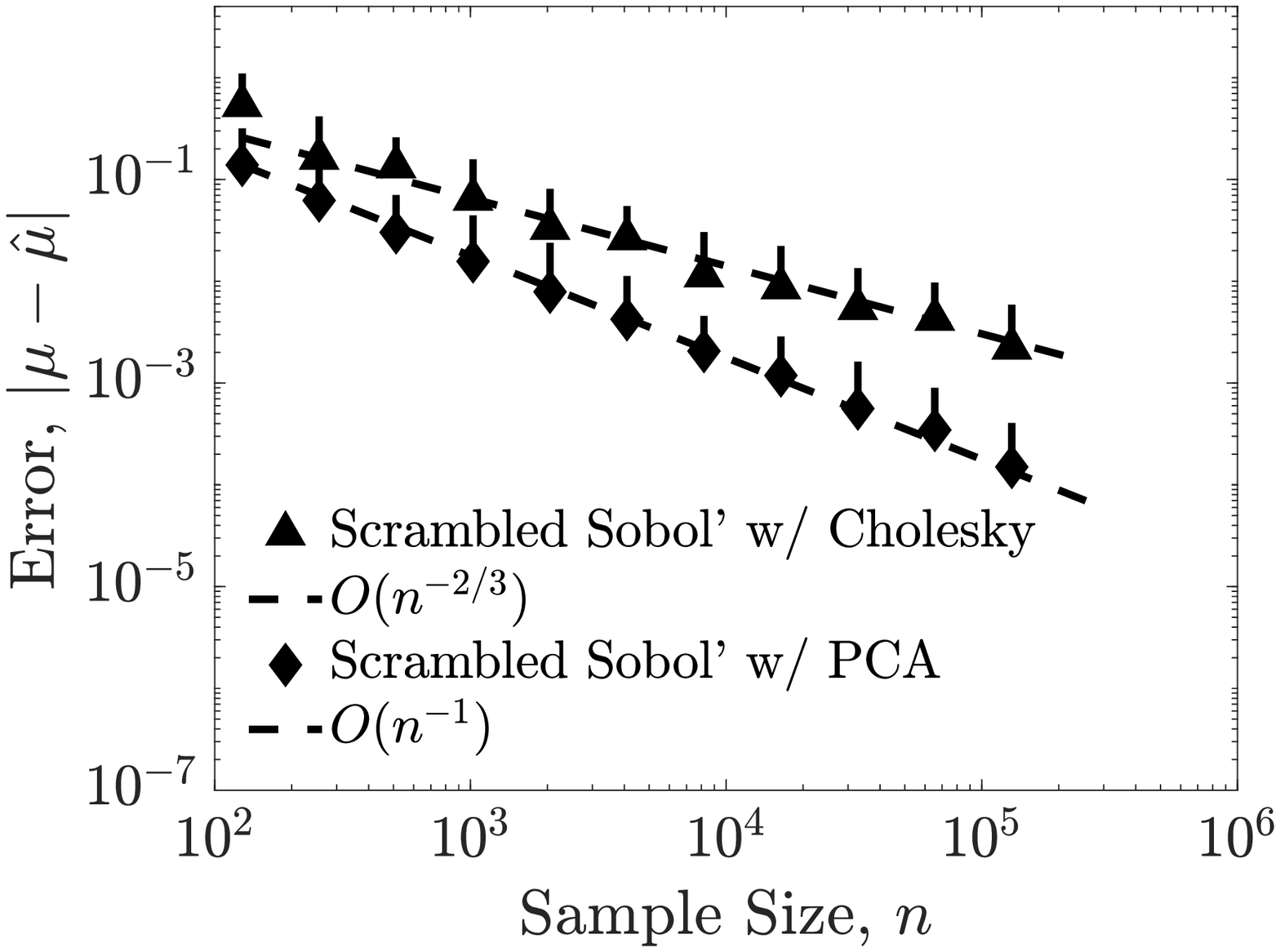}
		\caption{Cubature error for the price of an Asian arithmetic mean option using 
		different sampling 
		measures. The left side uses the PCA decomposition and the right side contrasts 
		the PCA with the Cholesky decomposition.  \label{FJH:fig:AsianOpt}}
\end{figure}

\begin{FJHLesson}
	\FJHLessonFive
\end{FJHLesson}

The left plot in Fig.\ \ref{FJH:fig:AsianOpt} chooses $\mathsf{L} = 
\mathsf{V}\mathsf{\Lambda}^{1/2}$, where the columns of $\mathsf{V}$ are the 
normalized 
eigenvectors of $\mathsf{\Sigma}$, and the diagonal elements of the diagonal matrix  
$\mathsf{\Lambda}$ are the eigenvalues of $\mathsf{\Sigma}$.  This is also called a 
principal component analysis (PCA) construction.  The advantage is that the main part of 
the 
Brownian motion affecting the option payoff is concentrated in the smaller dimensions.  
The right plot of Fig.\ 
\ref{FJH:fig:AsianOpt} contrasts the cubature error for two choices of  $\mathsf{L}$:  
one 
chosen by the PCA construction and the other coming from the Cholesky decomposition 
of $\mathsf{\Sigma}$.  This latter choice corresponds to constructing the Brownian 
motion by time 
differences.  The Cholesky decomposition of $\mathsf{\Sigma}$ gives a poorer rate of 
convergence, illustrating again Lesson \ref{FJH:eq:lessonfour}.  The superiority of the 
PCA construction was 
observed in \cite{AcwBroGla97}.

\section{A Bayesian Trio  Identity for Cubature Error} \label{FJH:sec:BayesTrio}
An alternative to the deterministic integrand considered thus far is to assume that the 
integrand that is a stochastic process.  Random input functions have been hypothesized 
by 
Diaconis \cite{Dia88a}, O'Hagan \cite{OHa91a}, Ritter \cite{Rit00a}, Rasmussen and 
Ghahramani \cite{RasGha03a}, and others. Specifically, suppose that $f \sim 
\calG\!\!\calP (0, 
s^2C_{\bstheta})$, a zero mean Gaussian process.  The covariance of 
this  Gaussian process is $s^2C_{\bstheta}$, where $s$ is a scale parameter, and 
$C_{\bstheta}:\calX \times \calX \to \R$ is defined by a shape parameter $\bstheta$.  
The 
sample space for this Gaussian process, $\calF$, does not enter significantly into the 
analysis.  Define the vector space of measures 
\begin{equation*}
\calM = \left\{\eta :  \biggl \lvert \int_{\calX^2} C_\bstheta(\bsx,\bst) 
\, \eta(\D \bsx) \eta(\D \bst) \biggr \rvert < \infty, \ \ \biggl \lvert \int_{\calX} 
C_\bstheta(\bsx,\bst) 
\, \eta(\D \bst) \biggr \rvert < \infty \ \ \forall \bsx \in \calX \right \},
\end{equation*}
and let $C_{\bstheta}$ be such that $\calM$ contains both $\nu$ and the Dirac 
measures 
$\delta_{\bst}$ for all $\bst \in \calX$. 

For a Gaussian process, all vectors of linear functionals of $f$ have a multivariate 
Gaussian distribution.  It then follows that for a 
deterministic sampling measure, $\widehat{\nu} = \sum_{i=1}^n w_i \delta_{\bsx_i}$, the 
cubature 
error, $\mu - \widehat{\mu}$, is distributed as $\calN \bigl(0 , s^2(c_0 - 
2 \bsc^T \bsw + \bsw ^T \mathsf{C} \bsw) \bigr)$, where 
\begin{subequations} \label{FJH:eq:Cvecmatdef}
\begin{gather}
c_0  = \int_{\calX^2} C_\bstheta(\bsx,\bst) \, \nu(\D \bsx) \nu(\D \bst), \qquad \bsc = 
\biggl( 
\int_{\calX} 
C_\bstheta(\bsx_i,\bst) \,\nu(\D \bst) \biggr)_{i=1}^n, \\
\mathsf{C}  = \bigl( C_\bstheta(\bsx_i,\bsx_j) \bigr)_{i,j=1}^n, \qquad \bsw = \bigl(w_i 
\bigr)_{i=1}^n.
\end{gather}
\end{subequations}
The dependence of $c_0$, $\bsc$, and $\mathsf{C}$ on the shape parameter 
$\bstheta$ is 
suppressed in the 
notation for simplicity. We 
define the Bayesian variation, discrepancy and confounding as 
\begin{subequations} \label{FJH:eq:btriodef}
\begin{gather}
\label{FJH:eq:btriodefa}
\FJHVAR^{\textup{B}}(f)  = s, \qquad \FJHDSC^{\textup{B}}(\nu - \widehat{\nu}) = 
\sqrt{c_0 - 
	2 \bsc^T \bsw + \bsw ^T \mathsf{C} \bsw},\\
\FJHCNF^{\textup{B}}(f,\nu - \widehat{\nu}) :=\frac{ \displaystyle \int_{\calX} 
	f(\bsx) \, (\nu - \widehat{\nu})(\D\bsx)}{s \sqrt{c_0 - 2 \bsc^T \bsw + \bsw ^T 
	\mathsf{C} \bsw}}.
\end{gather}
\end{subequations}

\begin{theorem}[Bayesian Trio Error Identity]  \label{FJH:thm:btrio} Let the integrand be 
an instance of a zero mean Gaussian process with covariance $s^2C_{\bstheta}$ and 
that is drawn from a sample space 
$\calF$.  For the  variation, discrepancy, and 
	confounding defined in \eqref{FJH:eq:btriodef}, the following error identity holds: 
	\begin{equation} \tag{BTRIO} \label{FJH:eq:btrio}
	\mu - \widehat{\mu}  = \FJHCNF^\textup{B}(f,\nu - \widehat{\nu}) \, 
	\FJHDSC^\textup{B}(\nu - \widehat{\nu}) \, 
	\FJHVAR^{\textup{B}}(f) \quad 
	\text{almost surely}.
	\end{equation}
	Moreover, $\FJHCNF^\textup{B}(f,\nu - \widehat{\nu}) \sim \calN(0,1)$. 
\end{theorem}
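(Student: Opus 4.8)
The plan is to handle the identity \eqref{FJH:eq:btrio} and the distributional claim separately. The identity is essentially tautological — following the same template as the deterministic and randomized proofs — once one recognizes that the numerator of $\FJHCNF^\textup{B}$ is the cubature error itself. The distributional claim is where the Gaussian process hypothesis does the real work, and it reduces to the observation that the confounding is the cubature error standardized by its own standard deviation.

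For the identity, I would first note that $\mu - \widehat{\mu} = \int_{\calX} f(\bsx) \, (\nu - \widehat{\nu})(\D\bsx)$ as in \eqref{FJH:eq:err_as_int}, which holds here since $\widehat{\nu} = \sum_{i=1}^n w_i \delta_{\bsx_i}$ and $\int_{\calX} f \, \delta_{\bsx_i} = f(\bsx_i)$. This integral is exactly the numerator of $\FJHCNF^\textup{B}(f, \nu - \widehat{\nu})$ in \eqref{FJH:eq:btriodef}, so multiplying the confounding by $\FJHDSC^\textup{B}(\nu - \widehat{\nu}) \, \FJHVAR^\textup{B}(f) = s\sqrt{c_0 - 2\bsc^T\bsw + \bsw^T\mathsf{C}\bsw}$ cancels the denominator and reproduces $\mu - \widehat{\mu}$. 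This establishes \eqref{FJH:eq:btrio} whenever $\FJHDSC^\textup{B}(\nu - \widehat{\nu}) > 0$; the degenerate case $\FJHDSC^\textup{B}(\nu - \widehat{\nu}) = 0$ forces the variance of $\mu - \widehat{\mu}$ to vanish, whence $\mu - \widehat{\mu} = 0$ almost surely and both sides agree.

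For the distribution, the key step is that any finite collection of linear functionals of a Gaussian process is jointly Gaussian, so $\mu - \widehat{\mu} = \int_{\calX} f \, (\nu - \widehat{\nu})(\D\bsx)$ is a scalar Gaussian in $f$ with mean zero, since $f$ has mean zero. I would then compute its variance by writing it as the double integral of the covariance $s^2 C_\bstheta$ against $(\nu - \widehat{\nu})$ in each argument and expanding term by term, using the definitions of $c_0$, $\bsc$, and $\mathsf{C}$ in \eqref{FJH:eq:Cvecmatdef}. This gives $\var(\mu - \widehat{\mu}) = s^2(c_0 - 2\bsc^T\bsw + \bsw^T\mathsf{C}\bsw) = [\FJHVAR^\textup{B}(f) \, \FJHDSC^\textup{B}(\nu - \widehat{\nu})]^2$. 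Since $\FJHCNF^\textup{B}$ is precisely this mean-zero Gaussian divided by its own standard deviation, it is standard normal.

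The proof is largely bookkeeping, and its only genuine subtleties are conceptual. Unlike the earlier settings, the randomness here resides in the \emph{integrand} $f$ rather than in the sampling measure, so the confounding is a random variable through $f$ and the claim $\FJHCNF^\textup{B} \sim \calN(0,1)$ is a statement about the law of the process; in particular this distributional assertion is meaningful only in the non-degenerate case $\FJHDSC^\textup{B} > 0$. The one point requiring care is the legitimacy of the variance computation — convergence of the double integral of $C_\bstheta$ against $(\nu - \widehat{\nu})$ in both arguments and the applicability of Fubini — which is guaranteed precisely by the finiteness conditions built into the definition of $\calM$. Beyond this there is no real obstacle: the variance formula was already recorded in the paragraph preceding the theorem, and the standardization argument is immediate.
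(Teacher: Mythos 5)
Your proposal is correct and follows essentially the same route as the paper's (very terse) proof: the identity is an immediate cancellation from the definitions in \eqref{FJH:eq:btriodef}, and the distributional claim reduces to the fact, recorded just before the theorem, that $\mu - \widehat{\mu} \sim \calN\bigl(0, s^2(c_0 - 2\bsc^T\bsw + \bsw^T\mathsf{C}\bsw)\bigr)$, so that the confounding is the error standardized by its own standard deviation. Your explicit treatment of the degenerate case $\FJHDSC^{\textup{B}}(\nu-\widehat{\nu})=0$ and of the almost-sure existence of the relevant integrals fills in details the paper leaves implicit, but introduces no new idea.
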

\begin{proof}  Although $\int_{\calX} f(\bsx) \, \nu(\D \bsx)$ and $f(\bst) = \int_{\calX} 
f(\bsx) \, \delta_\bst (\D \bsx)$ may not exist for all $f \in \calF$, these two quantities 
exist 
almost surely because $\bbE_f [\int_{\calX} f(\bsx) \, \nu(\D \bsx)]^2 = s^2c_0$, and 
$\bbE_f [f(\bsx)]^2 = s^2C_{\bstheta}(\bsx,\bsx)$ are both well-defined and finite.  The 
proof of the 
Bayesian trio identity follows directly from the definitions above.  The 
distribution of the confounding follows from the distribution of the cubature error.
\end{proof}

The choice of cubature weights that 
minimizes the Bayesian discrepancy in \eqref{FJH:eq:btriodefa} is $\bsw = 
\mathsf{C}^{-1} \bsc$, which results in $\FJHDSC^{\textup{B}}(\nu - \widehat{\nu}) = 
\sqrt{c_0 - 
\bsc ^T 
\mathsf{C}^{-1} \bsc}$ and $\mu - \widehat{\mu} \sim \calN \bigl(0 , s^2(c_0 - \bsc ^T 
\mathsf{C}^{-1} \bsc) \bigr)$.  However, computing the weights requires $\calO(n^3)$ 
operations 
unless $\mathsf{C}$ has some special structure.  This computational cost is significant 
and may be a deterrent to the use of optimal weights unless the weights are 
precomputed. For smoother covariance functions, $C_\bstheta$, there is often a 
challenge of $\mathsf{C}$ being ill-conditioned.

The \emph{conditional} distribution of the cubature error, $\mu - \widehat{\mu}$, given 
the 
observed data $ \{f(\bsx_i )= y_i\}_{i=1}^n$ is $\calN \bigl( \bsy^T (\mathsf{C}^{-1}\bsc - 
\bsw) , s^2(c_0 - \bsc ^T \mathsf{C}^{-1} \bsc) \bigr)$.  To remove the bias one should 
again 
choose $\bsw = \mathsf{C}^{-1} \bsc$.  This also makes the conditional distribution of 
the cubature error the same as the unconditional distribution of the cubature error.

Because the cubature error is a normal random variable, we may use function 
values to perform useful inference, namely, 
\begin{equation} \label{FJH:eq:MLEBdOne}
\bbP_f\bigl[\FJHabs{\mu - \widehat{\mu}} \le 2.58 \,
\FJHDSC^{\textup{B}}(\nu - \widehat{\nu}) \FJHVAR^{\textup{B}}(f)\bigr] = 99\%.
\end{equation}
However, unlike our use of random sampling measures that are constructed via carefully 
crafted random number generators, there is no assurance that our integrand is actually 
drawn from a Gaussian process whose covariance we have assumed.  

The covariance function, $s^2 C_{\bstheta}$, should be estimated, and one way to do so 
is 
through 
maximum likelihood estimation (MLE), using the function values drawn for the purpose of 
estimating the integral.  The log-likelihood function for the data $ \{f(\bsx_i )= 
y_i\}_{i=1}^n$ is
\begin{align*}
\ell(s,\bstheta | \bsy) & = \log \left(\frac{\exp\left(- \frac 12 s^{-2} \bsy^T 
\mathsf{C}_{\bstheta}^{-1} \bsy\right)}{\sqrt{(2\pi)^n  \det(s^2 
\mathsf{C_{\bstheta}})}}\right)\\
& = - \frac 12 s^{-2} \bsy^T \mathsf{C}_{\bstheta}^{-1} \bsy -\frac 12  
\log \bigl(
\det(\mathsf{C_{\bstheta}}) \bigr) - \frac n2 \log(s^2) + \text{constants}.
\end{align*}
Maximizing with respect to $s^2$, yields the MLE scale parameter:
\[
s_{\textup{MLE}} =  \sqrt{\frac 1n \bsy^T \mathsf{C}_{\bstheta_{\textup{MLE}}}^{-1} \bsy}.
\]
Plugging this into the log likelihood leads to the MLE shape parameter: 
\[
\bstheta_{\textup{MLE}} =  \argmin_\bstheta \left[\frac 1n \log 
\bigl(\det(\mathsf{C_{\bstheta}}) 
\bigr) 
+ \log\bigl(\bsy^T \mathsf{C}_{\bstheta}^{-1} \bsy \bigr)  \right ],
\]
which requires numerical optimization to evaluate. Using MLE estimates, the probabilistic 
error 
bound in \eqref{FJH:eq:MLEBdOne} becomes
\begin{multline} \label{FJH:eq:MLEBdTwo}
\bbP_f\left[\FJHabs{\mu - \widehat{\mu}} \le 2.58 
\sqrt{\frac 1n \left(c_{0,\bstheta_{\textup{MLE}}} - \bsc_{\bstheta_{\textup{MLE}}} ^T 
	\mathsf{C}_{\bstheta_{\textup{MLE}}}^{-1} \bsc_{\bstheta_{\textup{MLE}}} \right)  
	\left(\bsy^T 
	\mathsf{C}^{-1}_{\bstheta_{\textup{MLE}}}\bsy\right)}\right] \\
 = 99\%.
\end{multline}
Note that the value of $\bstheta_{\textup{MLE}}$ and the above Bayesian cubature error 
bound  
is unchanged by replacing  $C_\bstheta$ by a positive multiple of itself.  

Let's revisit the multivariate normal probability example of Sec.\ \ref{FJH:sec:Gauss}, 
and perform Bayesian cubature with a covariance kernel with modest smoothness from 
the Mat\'ern  family:
\begin{equation} \label{FJH:eq:Matern}
C_\theta(\bsx,\bst) = \prod_{j=1}^d \left(1 + 
\theta\FJHabs{x_j-t_j}\right) \exp\left(-\theta\FJHabs{x_j-t_j}\right)
\end{equation}
Using $100$ randomly scrambled Sobol' samples, the 
Bayesian 
cubature method outlined above was used to compute the multivariate normal probability 
$\mu$.  We used MLE scale and shape parameters and  optimal 
cubature weights $\bsw = 
\mathsf{C}_{\theta_{{\textup{MLE}}}}^{-1} \bsc_{\theta_{{\textup{MLE}}}}$.  
The actual errors are plotted in Fig.\ \ref{FJH:fig:MVNcubMLE}, which also provides a 
contrast of the actual error and the probabilistic error bound.  This bound was correct 
about $83\%$ of the time.  Based on the smoothness of the integrand and the kernel, 
one might expect $\calO(n^{-2})$ convergence of the answer, but this is not clear from 
the numerical computations.

\begin{figure}
	\centering
\includegraphics[height = \FJHfigheight] 
{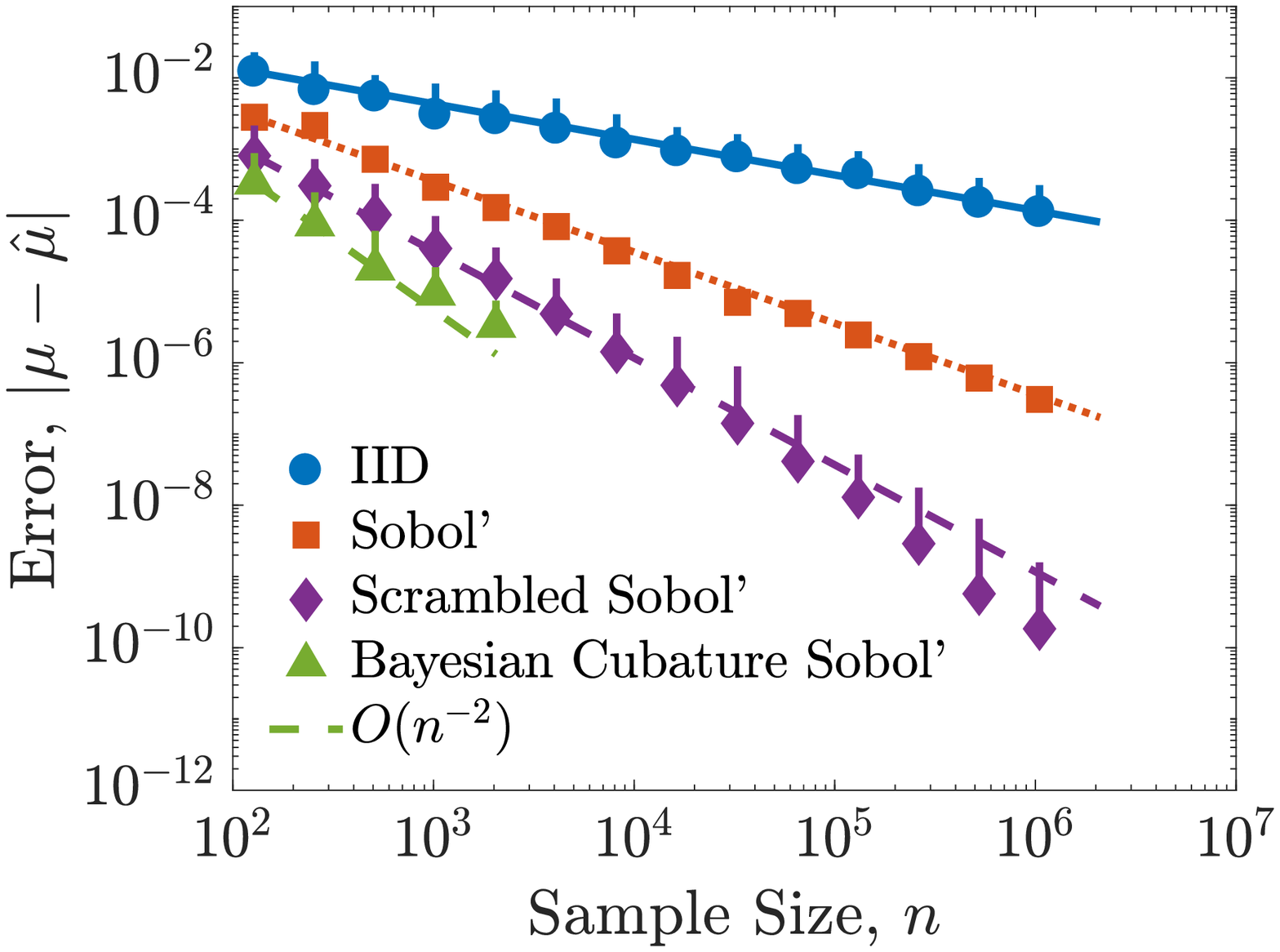}
\qquad
\includegraphics[height = \FJHfigheight] 
{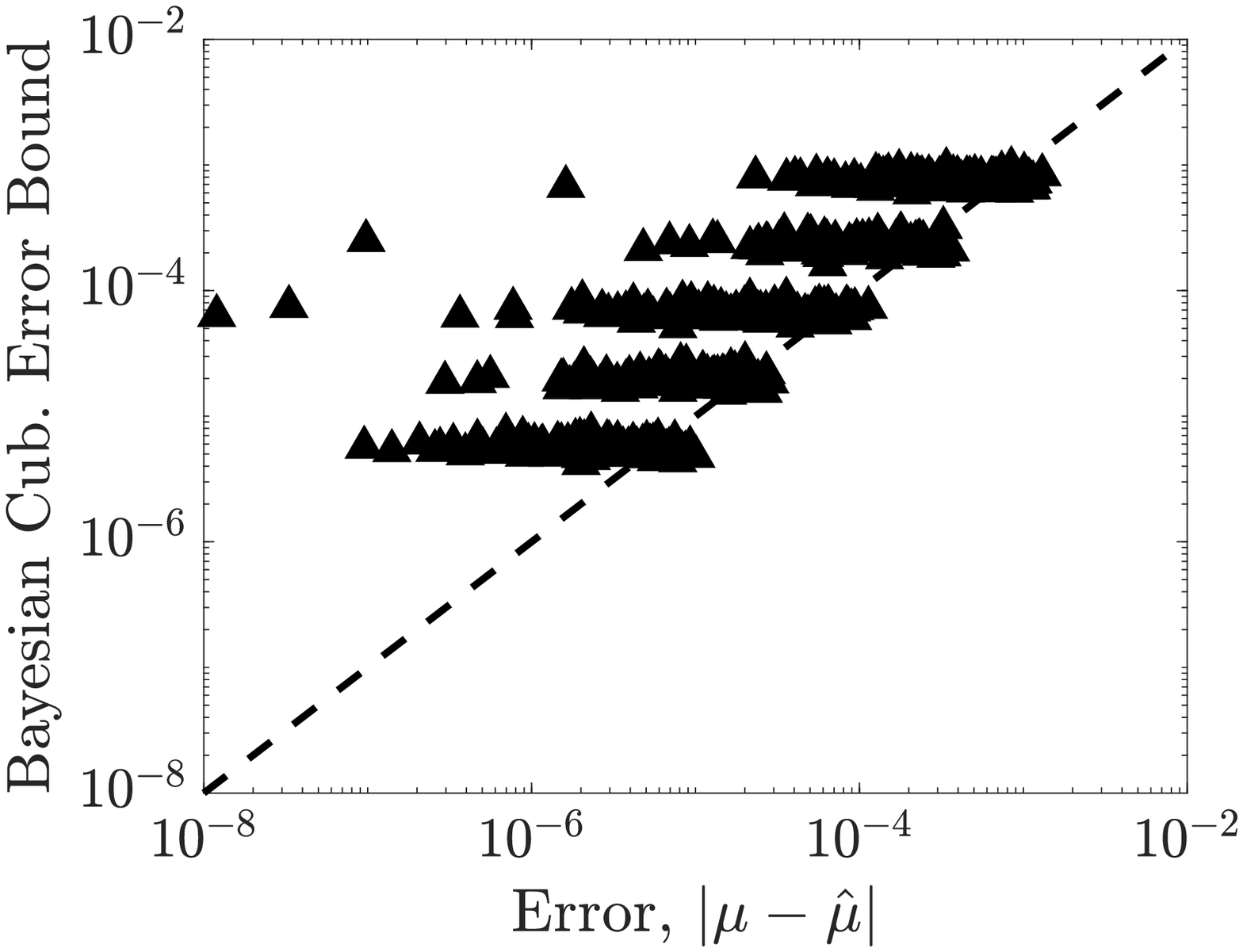}
\caption{The cubature errors for the 
multivariate normal 
probability example using Bayesian cubature (left), and the Bayesian cubature 
error versus the probabilistic error bound in \eqref{FJH:eq:MLEBdTwo} (right). 
\label{FJH:fig:MVNcubMLE}}
\end{figure}

\begin{FJHLesson}
	\FJHLessonTen
\end{FJHLesson}

Bayesian cubature offers hope with a dose of caution.  The theory is solid, but as  this 
example shows, one cannot know if the actual integrand under consideration is a typical 
instance of the Gaussian process being assumed, even when using MLE to determine the 
parameters of the distribution.  The success rate of the probabilistic error bound for this 
example is high, but not as high as the theory would suggest.  One may ask whether a 
larger candidate family of Gaussian processes needs to be considered, but then this 
might increase 
the time required for estimation of the parameters.  This example was carried 
out to only a rather modest sample size because of the $\calO(n^3)$ operations 
required to compute each $\widehat{\mu}$.  Efforts to reduce this operation count have 
been 
made by Anitescu, Chen, and Stein \cite{AniCheSte16a}, Parker, Reich and Gotwalt 
\cite{ParEtal17a}, and others.  Probabilistic numerics, 
\url{http://www.probabilistic-numerics.org}, of which Bayesian cubature is an example, 
holds promise that deserves further exploration.

The formulas for the Bayesian trio identity are analogous to those for the 
deterministic trio identity for reproducing kernel Hilbert spaces when $T(f) = 0$ for all $f 
\in \calF$.  Suppose that the reproducing kernel 
$K_\bstheta$ in the deterministic case is numerically equivalent to the covariance 
function $C_\bstheta$ used in Bayesian cubature.  The optimal cubature weights in the 
Bayesian case then mirror those in the 
deterministic case.  Likewise, for these optimal weights   
$\FJHDSC^\textup{D}(\nu - \widehat{\nu})$ is numerically the same as 
$\FJHDSC^\textup{B}(\nu 
- 
\widehat{\nu})$.

\begin{FJHLesson}
	\FJHLessonFourteen
\end{FJHLesson}

\section{A Randomized Bayesian Trio Identity for Cubature Error}

So far, we have presented three versions of the trio identity: a deterministic version  in 
Theorem \ref{FJH:thm:dtrio}, a randomized version in Theorem \ref{FJH:thm:rtrio}, and a 
Bayesian 
version in Theorem \ref{FJH:thm:btrio}.  The fourth and final version is a randomized 
Bayesian trio identity.  The variation remains unchanged from the Bayesian definition in 
\eqref{FJH:eq:btriodefa}.  
The randomized Bayesian discrepancy and confounding are defined as follows:
\begin{subequations} \label{FJH:eq:rbtriodef}
	\begin{gather}
\FJHDSC^{\textup{R}\textup{B}}(\nu - \widehat{\nu}) = \sqrt{\bbE_{\widehat{\nu}}  
\bigl(c_0 - 
		2 \bsc^T \bsw + \bsw ^T \mathsf{C} \bsw \bigr)},\\
	\FJHCNF^{\textup{R}\textup{B}}(f,\nu - \widehat{\nu}) :=\frac{ \displaystyle \int_{\calX} 
		f(\bsx) \, (\nu - \widehat{\nu})(\D\bsx)}{s  \sqrt{\bbE_{\widehat{\nu}}  \bigl(c_0 - 
			2 \bsc^T \bsw + \bsw ^T \mathsf{C} \bsw \bigr)}}.
	\end{gather}
\end{subequations}
The proof of the randomized Bayesian trio error identity is similar to the proofs of the 
other trio identities and is omitted.

\begin{theorem}[Randomized Bayesian Trio Error Identity]  \label{FJH:thm:rbtrio} Let 
the integrand be 
	an instance of a zero mean Gaussian process with covariance $s^2C_{\bstheta}$ and 
	that is drawn from a sample space $\calF$.  Let the sampling measure be drawn 
	randomly from $\calM_{\textup{S}}$ according to some probability distribution.  For 
	the  
	variation defined in \eqref{FJH:eq:btriodefa}, and the discrepancy and 
	confounding defined in \eqref{FJH:eq:rbtriodef}, the following error identity holds: 
	\begin{equation} \tag{RBTRIO} \label{FJH:eq:rbtrio}
	\mu - \widehat{\mu}  = \FJHCNF^{\textup{R}\textup{B}}(f,\nu - \widehat{\nu}) \, 
	\FJHDSC^{\textup{R}\textup{B}}(\nu - \widehat{\nu}) \, 
	\FJHVAR^{\textup{B}}(f) 
	\quad 
	\text{almost surely}.
	\end{equation}
	Moreover, $\FJHCNF^{\textup{R}\textup{B}}(f,\nu - \widehat{\nu}) \sim \calN(0,1)$. 
\end{theorem}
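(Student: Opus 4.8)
The plan is to mirror the proof of the Bayesian identity (Theorem~\ref{FJH:thm:btrio}), inserting one extra layer of averaging over the random sampling measure and exploiting the independence of $f$ and $\widehat{\nu}$. First I would record that the cubature error equals the single integral \eqref{FJH:eq:err_as_int}, $\mu - \widehat{\mu} = \int_{\calX} f(\bsx)\,(\nu - \widehat{\nu})(\D\bsx)$, almost surely: conditioning on a realization $\widehat{\nu} = \sum_{i=1}^n w_i \delta_{\bsx_i}$ makes the sampling measure deterministic, so the existence argument of Theorem~\ref{FJH:thm:btrio}---the evaluations $f(\bsx_i)$ and the integral $\int_{\calX} f\,\nu(\D\bsx)$ exist because their second moments $s^2 C_\bstheta(\bsx_i,\bsx_i)$ and $s^2 c_0$ are finite---applies verbatim, after which I integrate out $\widehat{\nu}$.

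Granting the integral representation, the identity \eqref{FJH:eq:rbtrio} is formal. Substituting the definitions \eqref{FJH:eq:rbtriodef} of $\FJHDSC^{\textup{RB}}$ and $\FJHCNF^{\textup{RB}}$ together with the variation $\FJHVAR^{\textup{B}}(f)=s$ from \eqref{FJH:eq:btriodefa}, the product telescopes back to $\int_{\calX} f\,(\nu - \widehat{\nu})(\D\bsx)$ whenever the (deterministic) denominator $s\,\FJHDSC^{\textup{RB}}(\nu - \widehat{\nu})$ is nonzero. If instead $\FJHDSC^{\textup{RB}}=0$, then $\bbE_{\widehat{\nu}}\bigl(c_0 - 2\bsc^T\bsw + \bsw^T\mathsf{C}\bsw\bigr)=0$, the conditional error variance vanishes almost surely, $\mu-\widehat{\mu}=0$, and I would close this degenerate case with the same zero convention used for the earlier identities.

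The distributional assertion carries the real content. Because $f$ and $\widehat{\nu}$ are independent, conditioning on $\widehat{\nu}$ places us exactly in the Bayesian setting, so $\mu - \widehat{\mu}\mid\widehat{\nu}\sim\calN\bigl(0,\,s^2[\FJHDSC^{\textup{B}}(\nu - \widehat{\nu})]^2\bigr)$. Since the normalizer $s\,\FJHDSC^{\textup{RB}}$ is precisely the root-mean-square of this conditional standard deviation, the tower property gives $\bbE\,\FJHCNF^{\textup{RB}}=0$ and $\bbE\,[\FJHCNF^{\textup{RB}}]^2 = s^{-2}[\FJHDSC^{\textup{RB}}]^{-2}\,\bbE_{\widehat{\nu}}\bigl(s^2[\FJHDSC^{\textup{B}}]^2\bigr)=1$, so the first two moments agree with $\calN(0,1)$.

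The hard part is that equal moments do not force equal laws. The marginal law of $\FJHCNF^{\textup{RB}}$ is a scale mixture of centred Gaussians with random scale $\FJHDSC^{\textup{B}}(\nu - \widehat{\nu})/\FJHDSC^{\textup{RB}}$ (a ratio with unit second moment), and such a mixture is genuinely $\calN(0,1)$ only when that scale is almost surely equal to one---otherwise it is symmetric and of unit variance but leptokurtic. I therefore expect justifying the exact normality to be the one delicate point, and would resolve it in one of three ways: (i) read the conclusion conditionally, where $\FJHCNF^{\textup{RB}}\mid\widehat{\nu}\sim\calN\bigl(0,[\FJHDSC^{\textup{B}}/\FJHDSC^{\textup{RB}}]^2\bigr)$ is a genuine Gaussian; (ii) impose that the randomization leaves the Bayesian discrepancy invariant, $\FJHDSC^{\textup{B}}(\nu - \widehat{\nu})=\FJHDSC^{\textup{RB}}$ almost surely (as holds, for example, for shift-type randomizations of a shift-invariant kernel), under which the conditional---and hence marginal---law is exactly $\calN(0,1)$; or (iii) normalize the confounding by the realized $\FJHDSC^{\textup{B}}(\nu - \widehat{\nu})$ rather than by its root-mean-square, reducing the statement to a per-realization application of Theorem~\ref{FJH:thm:btrio}. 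With any of these in place, the remainder is a transcription of the Bayesian and randomized arguments.
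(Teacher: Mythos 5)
The paper does not actually supply a proof of this theorem --- it states only that ``the proof \ldots is similar to the proofs of the other trio identities and is omitted'' --- so your reconstruction is being compared against an intended argument rather than a written one. Your approach (condition on $\widehat{\nu}$ to reduce the existence and identity claims to the Bayesian case of Theorem~\ref{FJH:thm:btrio}, then note that the identity itself is a formal consequence of the definitions in \eqref{FJH:eq:rbtriodef}) is exactly the argument the paper is gesturing at, and your handling of the degenerate case $\FJHDSC^{\textup{R}\textup{B}}=0$ is consistent with the conventions used in the earlier theorems.

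Your final paragraph, however, is more than a stylistic worry: it identifies a genuine defect in the statement as given. Conditional on $\widehat{\nu}$, the confounding is $\calN\bigl(0,[\FJHDSC^{\textup{B}}(\nu-\widehat{\nu})/\FJHDSC^{\textup{R}\textup{B}}(\nu-\widehat{\nu})]^2\bigr)$, so its marginal law is a centred Gaussian scale mixture with unit second moment. By strict convexity of $v\mapsto e^{-t^2v/2}$ (Jensen applied to the characteristic function), such a mixture equals $\calN(0,1)$ if and only if the conditional variance is almost surely constant, i.e.\ $\FJHDSC^{\textup{B}}(\nu-\widehat{\nu})=\FJHDSC^{\textup{R}\textup{B}}(\nu-\widehat{\nu})$ almost surely; otherwise the law is symmetric with unit variance but strictly leptokurtic. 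The paper asserts $\FJHCNF^{\textup{R}\textup{B}}(f,\nu-\widehat{\nu})\sim\calN(0,1)$ without this hypothesis and without proof, so the claim as printed requires one of the repairs you list: an invariance assumption on the randomization (your option (ii), satisfied e.g.\ by random shifts of a shift-invariant kernel), a conditional reading of the distributional statement, or renormalization by the realized Bayesian discrepancy. You should not feel obliged to close this gap by cleverness --- it is a gap in the theorem, not in your argument --- but a complete write-up should state explicitly which repair it adopts.
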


\begin{FJHLesson}
	\FJHLessonSix
\end{FJHLesson}

\section{Dimension Dependence of the Discrepancy, Cubature Error and 
Computational Cost} 
\label{FJH:sec:Tract}
The statements about the rates of decay of discrepancy and cubature error as the 
sample size increases have so far hidden the dependence on the dimension of the 
integration domain.  Fig.\ \ref{FJH:fig:AsianOpt} on the left shows a 
clear error 
decay rate of $\calO(n^{-1 + \epsilon})$ for low discrepancy sampling for the option 
pricing problem with dimension $12$.  However, Fig.\ \ref{FJH:fig:unwtdiscdiffpts} shows 
that the discrepancy for these scrambled Sobol' points does not decay as quickly as 
$\calO(n^{-1 
+ \epsilon})$ for moderate $n$.  

There has been a tremendous effort to 
understand the effect of the dimension of the integration problem on the convergence 
rate.  Sloan and Wo\'zniakowski \cite{SloWoz97} pointed out how
the sample size required to achieve a desired error tolerance could grow exponentially 
with 
dimension.  Such problems are called \emph{intractable}.  This led to a search for 
settings where the sample size required to  achieve a desired error tolerance only 
grows polynomially with dimension (\emph{tractable} problems) or is independent of the 
dimension (\emph{strongly tractable} problems). The three volume masterpiece by Novak 
and Wo\'zniakowski \cite{NovWoz08a,NovWoz10a,NovWoz12a} and the references cited 
therein contain necessary and sufficient conditions for tractability.  The parallel idea
of \emph{effective dimension} was introduced by Caflisch, Morokoff, 
and Owen \cite{CafMorOwe97} and developed further in \cite{LiuOwe04a}.

Here we provide a glimpse into those situations where the dimension of the problem does 
not have an adverse effect on the convergence rate of the cubature error and the 
discrepancy.  Let's generalize the reproducing kernel used to define the 
$L^2$-discrepancy in \eqref{FJH:eq:L2Kdef}, as well as the corresponding 
variation and the discrepancy for equi-weighted sampling measures by introducing 
\emph{coordinate weights} $\gamma_1, \gamma_2, \ldots$:
\begin{gather*} \label{FJH:eq:wtL2Kdef}
K(\bsx,\bst) =\prod_{k = 1}^d [1 + \gamma_k^2\{1 - \max(x_k,t_k)\}], \\
\FJHVAR^\textup{D}(f) = \Bigl \lVert \bigl (\gamma_{\fraku}^{-1}
\lVert\partial^{\fraku} f\rVert_{L^2}\bigr )_{\fraku \ne \emptyset} \Bigr \rVert_2 \qquad
\gamma_{\fraku} = \prod_{k \in \fraku} \gamma_k,
\end{gather*}
\begin{multline} \label{FJH:eq:wtL2disc}
\bigl[\FJHDSC^\textup{D}(\nu - \widehat{\nu})\bigr]^2 = \prod_{k=1}^d\Bigl(1 + 
\frac{\gamma_k^2}{3}\Bigr)
- \frac{2}{n} \sum_{i=1}^n \prod_{k=1}^d \left (1 + \frac{\gamma_k^2(1 - 
x_{ik}^2)}{2} \right) \\ + \frac{1}{n^2}\sum_{i,j=1}^n \prod_{k = 1}^d [1 + 
\gamma_k^2( 1 - \max(x_{ik},x_{jk}))].
\end{multline}
For $\gamma_1 = \cdots = \gamma_d = 1$, we recover the situation in Sec.\ 
\ref{FJH:sec:dettrio}, where the decay rate of the discrepancy is dimension dependent 
for moderate sample sizes.  However if $\gamma_k^2 = 
k^{-3}$, then the discrepancies for randomly shifted lattice nodesets and scrambled   
Sobol' sequences show only a slight dimension dependence, as shown in Fig.\ 
\ref{FJH:fig:wtdiscdiffpts}.

\begin{figure}
	\centering
	\includegraphics[height=\FJHfigheight]{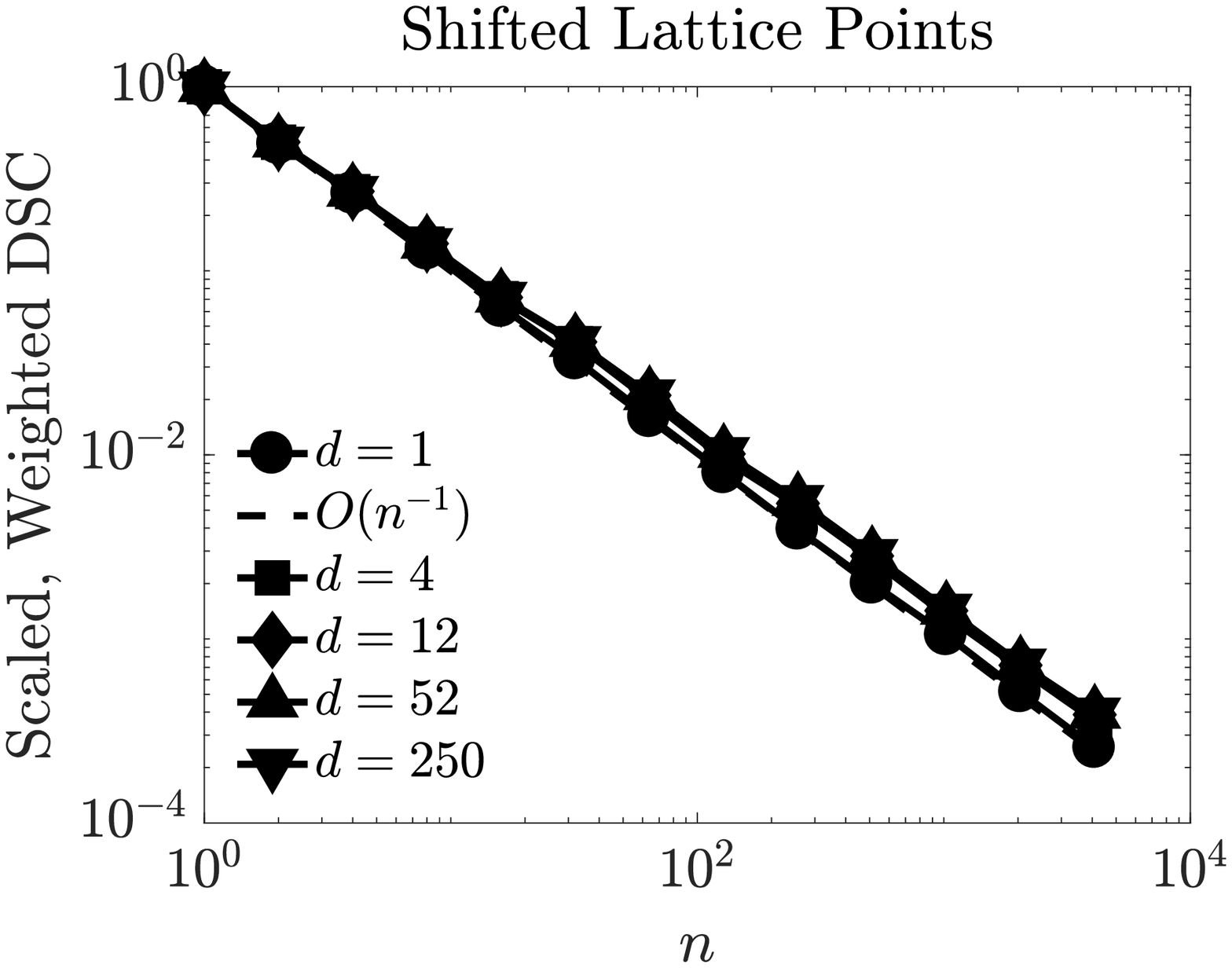}   \qquad 
	\includegraphics[height=\FJHfigheight]{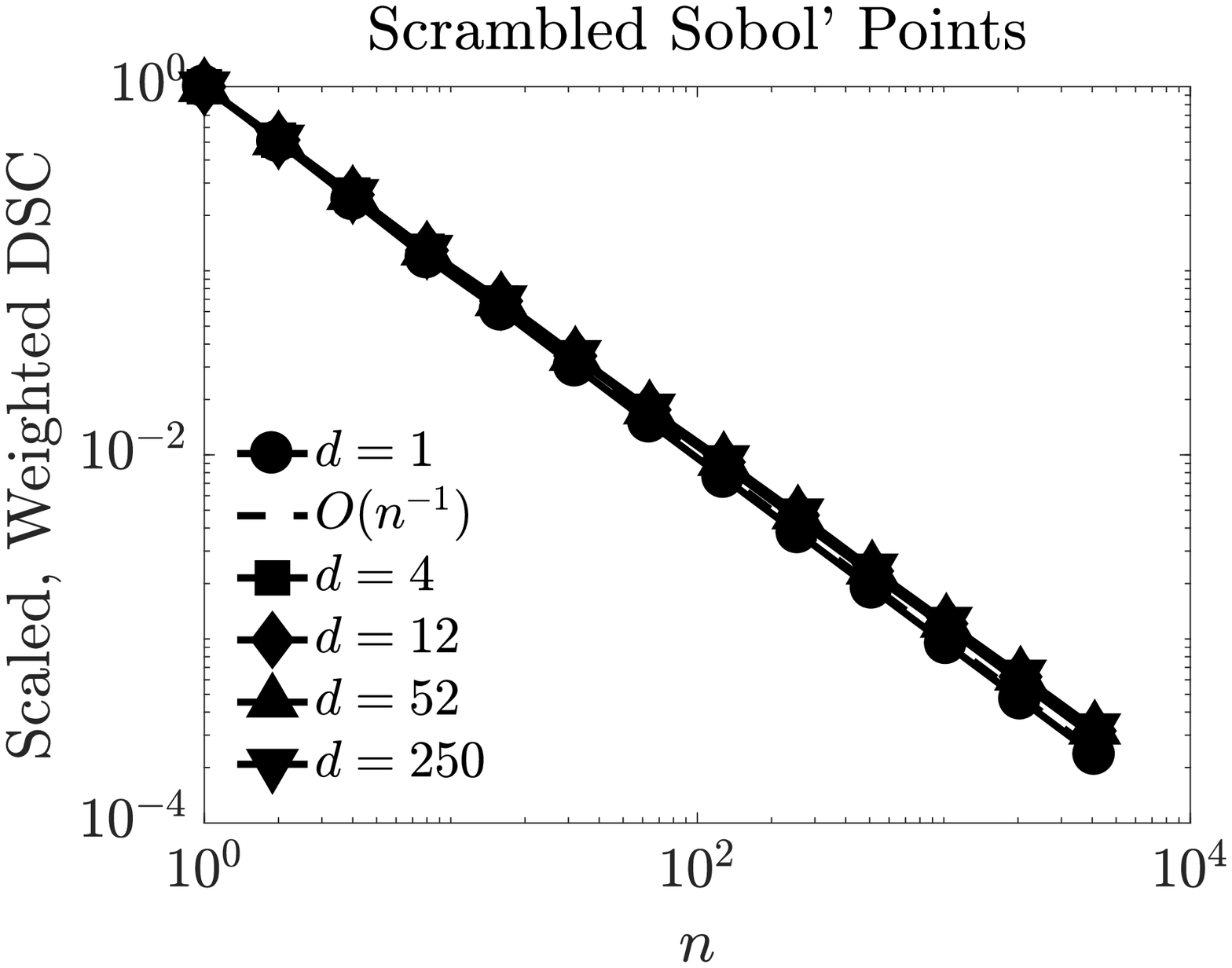} 
	\caption{The root mean square weighted $L^2$-discrepancies given by 
	\eqref{FJH:eq:wtL2disc} with $\gamma_k^2 = 
	k^{-3}$
		for randomly shifted 
		lattice sequence nodesets and randomly scrambled and shifted Sobol' sequences 
		points.  A variety of dimensions is shown.
		\label{FJH:fig:wtdiscdiffpts}}
\end{figure}

When the weights $\gamma_k$ decay with $k$, the discrepancy depends \emph{less} 
on how evenly the data sites appear in projections involving the higher numbered 
coordinates.  On 
the other hand, the variation in this case gives 
\emph{heavier} weight to the $\partial^{\fraku} f$ with 
$\fraku$ containing large $k$.  For the cubature error decay to mirror the decay of the 
discrepancy shown in Fig.\ \ref{FJH:fig:wtdiscdiffpts}, the integrand must depend only 
slightly on the coordinates with higher indices, so that the variation will be modest.

\begin{FJHLesson} 
	\FJHLessonEight
\end{FJHLesson}

For some integration problems the dimension is infinite and so our problem 
\eqref{FJH:eq:INT} 
becomes
\begin{equation} \label{FJH:eq:InfINT} \tag{$\infty$INT}
\mu = \lim_{d \to \infty} \mu^{(d)}, \qquad \mu^{(d)} = \int_{\calX^{(d)}} f^{(d)}(\bsx) \, 
\nu^{(d)}(\D 
\bsx),
\end{equation} 
where $\calX^{(d)} = \calX_1 \times \cdots \times \calX_d$, $\nu^{(d)}$ is a measure on 
$\calX^{(d)} $ 
with 
independent marginals $\nu_k$ on $\calX_k$, and $f^{(1)}, f^{(2)}, \ldots$ are 
approximations to 
an 
infinite-dimensional integrand.  The discrepancy and cubature error analysis for $d \to 
\infty$ is 
similar to the large $d$ situation, but now the \emph{compuational cost} of the 
approximate integrand is a concern \cite{CreuEtal08a, HicMGRitNiu09a,  WanHic00b, 
NiuHic09b}.

One could approximate $\mu$ by $\widehat{\mu}^{(d)}$, the 
approximation 
to 
$\mu^{(d)}$, for some large $d$.  However, the computational cost of evaluating  
$f^{(d)}(\bsx)$ for a single $\bsx$ typically requires $\calO(d)$ 
operations.  So this approach would  require a high computational cost of  $\calO(nd)$ 
operations to 
compute $\widehat{\mu}^{(d)}$.

The often better alternative is to decompose the $f^{(d)}$ into pieces $f_\fraku$, for 
$\fraku 
\subset 
1 \!\!:\!\! d$, such that $f^{(d)} = \sum_{\fraku \subseteq 1:d} f_{\fraku}$ and the 
$f_\fraku$ depend on $\fraku$ but \emph{not on $d$}.  Multi-level Monte Carlo 
approximates \eqref{FJH:eq:InfINT} by
\begin{equation*}
\widehat{\mu} := \widehat{\mu}\bigl(f^{(d_1)}\bigr) + \widehat{\mu}\bigl(f^{(d_2)} - 
f^{(d_1)}\bigr) + 
\cdots + \widehat{\mu}\bigl(f^{(d_L)} - f^{(d_{L-1})}\bigr), 
\end{equation*}
for some choice of $d_l$ with $d_1 < \cdots < d_L$.  This works well when 
$\FJHVAR\bigl(f^{(d_l)} 
- f^{(d_{l-1} )} \bigr)$ decreases as $l$ increases and when $\mu - \mu^{(d_L)}$ is small 
\cite{
	Gil14a, Gil15a, Gil08b, Hei01a,  HicMGRitNiu09a, NiuHic09b}.  The computational cost 
	of 
$\widehat{\mu}\bigl(f^{(d_l)} - 
f^{(d_{l-1})} \bigr)$ is $\calO(n_l d_l)$, and as $d_l$ increases, $n_l$ decreases, thus 
moderating the cost.   There is bias, since 
$\mu - 
\mu^{(d_L)}$ is not approximated at all, but this can be removed by a clever a 
randomized 
sampling method \cite{RheGly12a}.

The Multivariate Decomposition Method approximates \eqref{FJH:eq:InfINT} by
\begin{equation*}
\widehat{\mu} = \widehat{\mu}(f_{\fraku_1}) + \widehat{\mu}(f_{\fraku_2}) + 
\cdots +\widehat{\mu}(f_{\fraku_L}),
\end{equation*} 
where the $\fraku_l$ are the important sets of coordinate indices as judged by 
$\FJHVAR^{\textup{D}}(f_\fraku)$ to ensure that $\mu - \sum_{\fraku \notin \{\fraku_1,   
\ldots, 
	\fraku_L\}} \mu(f_\fraku)$ is small \cite{Was13b}.  The computational cost of each 
$\widehat{\mu}(f_{\fraku_l})$ 
is $\calO(n_l \lvert \fraku_l \rvert)$.  If the important sets have small cardinality, 
$\lvert \fraku_l \rvert$, the computational cost is moderate.  

\begin{FJHLesson}
	\FJHLessonThirteen
\end{FJHLesson}

\section{Automatic Stopping Criteria for Cubature}
The trio identity decomposes the cubature error into three factors.  By improving the 
sampling scheme, the discrepancy may be made smaller.  By re-writing the integral, the 
variation of the integrand might be made smaller.  For certain situations, we may find that 
the confounding is small.  While the trio identity helps us understand what contributes to 
the cubature error, it does not directly answer the question of how many samples are 
required to achieve the desired accuracy, i.e., how to ensure that 
\begin{equation} \label{FJH:eq:errcrit} \tag{ErrCrit}
\lvert \mu - \widehat{\mu} \rvert \le \varepsilon
\end{equation}
for some predetermined $\varepsilon$.

Bayesian cubature, as described in Sec.\ \ref{FJH:sec:BayesTrio}, provides data-based 
cubature
error bounds.  These can be used to determine how large $n$ must be to satisfy 
\eqref{FJH:eq:errcrit} with high probability.

For IID Monte Carlo the Central Limit Theorem may be used to construct an approximate 
confidence interval for $\mu$, however, this approach relies on believing that $n$ is 
large enough to have i) reached the asymptotic limit, and ii) obtained a reliable upper 
bound on the standard deviation in terms of a sample standard deviation.  There have 
been recent efforts to develop a more robust approach to fixed width confidence 
intervals 
\cite{BayEtal14a,HicEtal14a,Jia16a}.  An 
upper bound on the standard deviation may be computed by assuming an upper bound 
on the kurtosis or estimating the kurtosis from data.  The standard 
deviation of an integrand can be confidently bounded in terms of the sample standard 
deviation if it lies in the cone of functions with a known bound on their kurtosis.
A bound on 
the kurtosis also allows one to use a Berry-Esseen inequality, which is a finite sample version 
of  the Central Limit Theorem, to determine a sufficient sample size for computing the integral 
with the desired accuracy.

For low discrepancy sampling, independent random replications may be used to estimate 
the error, but this approach lacks a rigorous justification.  An alternative proposed by the 
author and his collaborators is to decompose the integrand into a Fourier series and 
estimate the decay rate of the Fourier coefficients that contribute to the error
\cite{HicJim16a,HicEtal17a,JimHic16a}.  This approach may also be used to satisfy 
relative error criteria or error criteria involving a function of several integrals  
\cite{HicEtal17a}.  Our automatic stopping criteria have been 
implemented in the Guaranteed Automatic Integration Library (GAIL) \cite{ChoEtal15a}.

\begin{FJHLesson}
	\FJHLessonEleven
\end{FJHLesson}

\section{Summary}
To conclude, we repeat the lessons highlighted above.  The order may be somewhat 
different.

\FJHLessonZero \FJHLessonSix  \FJHLessonTwoHalf \FJHLessonFourteen 
\FJHLessonSeven 

\bigskip

\hangindent = 5ex
\noindent
\emph{\FJHQOne} \FJHLessonTwo \FJHLessonThree   \FJHLessonFive

\hangindent = 5ex
\noindent
\emph{\FJHQTwo} \FJHLessonFour 	\FJHLessonEight 	\FJHLessonThirteen

\hangindent = 5ex
\noindent
\emph{\FJHQThree} \FJHLessonTen \FJHLessonEleven

\begin{acknowledgement}
	The author would like to thank the organizers of MCQMC 2016 for an exceptional 
	conference.  The author is indebted to his colleagues in the MCQMC 
	community for all that he has learned from them.  In particular, the author thanks 
 Xiao-Li Meng for introducing the trio identity and for 
 discussions related to its development.  The author also thanks Llu\'is Antoni 
	Jim\'enez Rugama for helpful comments in preparing this tutorial.  This work is partially 
	supported by the National Science Foundation grant DMS-1522687.
\end{acknowledgement}


\FJHrestorevalues

\end{document}